\definecolor{tianred}{rgb}{0.57, 0.36, 0.51}                                   
\definecolor{tianblue}{rgb}{0.0, 0.22, 0.66}                                   
\definecolor{tianpink}{rgb}{0.88, 0.56, 0.59}                                  
\definecolor{tiangreen}{rgb}{0.24, 0.82, 0.44}                                 
\DeclareSymbolFont{cyrletters}{OT2}{wncyr}{m}{n}
\DeclareMathSymbol{\RBe}{\mathalpha}{cyrletters}{"42}                          
\DeclareMathSymbol{\Che}{\mathalpha}{cyrletters}{"51}                          
\DeclareMathSymbol{\Sha}{\mathalpha}{cyrletters}{"58}                          
\DeclareRobustCommand\widecheck[1]{{\mathpalette\@widecheck{#1}}}
\def\@widecheck#1#2{%
    \setbox\z@\hbox{\m@th$#1#2$}%
    \setbox\tw@\hbox{\m@th$#1%
       \widehat{%
          \vrule\@width\z@\@height\ht\z@
          \vrule\@height\z@\@width\wd\z@}$}%
    \dp\tw@-\ht\z@
    \@tempdima\ht\z@ \advance\@tempdima2\ht\tw@ \divide\@tempdima\thr@@
    \setbox\tw@\hbox{%
       \raise\@tempdima\hbox{\scalebox{1}[-1]{\lower\@tempdima\box
\tw@}}}%
    {\ooalign{\box\tw@ \cr \box\z@}}}
\theoremstyle{plain}      \newtheorem{thm}{Theorem}[section]                   
\theoremstyle{plain}                   
\theoremstyle{plain}      \newtheorem{lem}[thm]{Lemma}                         
\theoremstyle{plain}                             
\theoremstyle{plain}      \newtheorem{cor}[thm]{Corollary}                     
\theoremstyle{plain}                         
\theoremstyle{plain}                        
\theoremstyle{plain}      \newtheorem{conjecture}[thm]{Conjecture}             
\theoremstyle{definition} \newtheorem{rmk}[thm]{Remark}                        
\theoremstyle{definition}                      
\theoremstyle{definition} \newtheorem{df}[thm]{Definition}                     
\theoremstyle{definition}                   
\theoremstyle{definition} \newtheorem{eg}[thm]{Example}                        
\theoremstyle{definition}                        
\theoremstyle{definition}                        
\theoremstyle{definition}                      
\theoremstyle{definition}                    
\theoremstyle{definition}                  
\theoremstyle{definition}                        
\theoremstyle{definition}                       
\theoremstyle{definition}                    
\theoremstyle{definition}                  
\theoremstyle{definition} \newtheorem{construction}[thm]{Construction}         
\theoremstyle{definition}              
\theoremstyle{definition}                  
\theoremstyle{definition} \newtheorem{prop-df}[thm]{Proposition-Definition}    
\theoremstyle{definition}
\newtheorem*{construction*}{Construction}                                      
\newtheorem*{conjecture*}{Conjecture}                                          
\newtheorem*{hypothesis*}{Hypothesis}                                          
\newtheorem*{convention*}{Convention}                                          
\newtheorem*{notation*}{Notation}                                              
\newtheorem*{prop*}{Proposition}                                               
\newtheorem*{summary*}{Summary}                                                
\newtheorem*{qt*}{Question}                                                    
\newtheorem*{rmk*}{Remark}                                                     
\newtheorem*{fact*}{Fact}                                                      
\newtheorem*{lizi*}{Example}                                                   
\newtheorem*{df*}{Definition}                                                  
\theoremstyle{plain}
\newtheorem*{thm*}{Theorem}                                                    
\crefname{thm}{Theorem}{Theorems}                                              %
\crefname{thme}{Th\'eo\`eme}{Th\'eo\`emes}
\crefname{lem}{Lemma}{Lemmas}
\crefname{lemme}{Lemme}{Lemmes}
\crefname{eg}{Example}{Examples}
\crefname{ege}{Exemple}{Exemples}
\crefname{rmk}{Remark}{Remarks}
\crefname{rmke}{Remarque}{Remarques}
\crefname{cor}{Corollary}{Corollaries}
\crefname{core}{Corollaire}{Corollaires}
\crefname{df}{Definition}{Definitions}
\crefname{dfe}{D\'efinition}{D\'efinitions}
\crefname{question}{Question}{Questions}
\crefname{prop}{Proposition}{Propositions}
\crefname{conjecture}{Conjecture}{Conjectures}
\crefname{construction}{Construction}{Constructions}
\newcommand{\bconst}{\begin{construction}}                                     
\newcommand{\econst}{\end{construction}}                                       
\newcommand{\benum}{\begin{enumerate}[label={{\upshape(\alph*)}}]}             
\newcommand{\benuma}{\begin{enumerate}[label={{\upshape(\arabic*)}}]}          
\newcommand{\benumr}{\begin{enumerate}[label={{\upshape(\roman*)}}]}           
\newcommand{\eenum}{\end{enumerate}}
\newcommand{\bconj}{\begin{conjecture}}
\newcommand{\econj}{\end{conjecture}}
\newcommand{\bconjnn}{\begin{conjecture*}}
\newcommand{\econjnn}{\end{conjecture*}}
\newcommand{\begs}{\begin{eg}\hfill\benuma}                                    
\newcommand{\eegs}{\eenum\end{eg}}                                             
\newcommand{\brmks}{\begin{rmk}\hfill\benuma}                                  
\newcommand{\ermks}{\eenum\end{rmk}}                                           
\newcommand{\bdfs}{\begin{df}\hfill\benuma}                                    
\newcommand{\edfs}{\eenum\end{df}}                                             
\newcommand{\bitem}{\begin{itemize}}                                           
\newcommand{\eitem}{\end{itemize}}                                             
\newcommand{\be}{\begin{equation}}                                             
\newcommand{\ee}{\end{equation}}                                               
\newcommand{\benn}{\begin{equation*}}                                          
\newcommand{\eenn}{\end{equation*}}                                            
\newcommand{\bqt}{\begin{qt*}\rm}                                              
\newcommand{\eqt}{\end{qt*}}                                                   
\newcommand{\bqtr}{\begin{qt*}\rm\coLR}                                        
\newcommand{\eqtr}{\end{qt*}}                                                  
\newcommand{\beac}{\begin{equation}\begin{array}{c}}                           
\newcommand{\eeac}{\end{array}\end{equation}}                                  
\newcommand{\beqn}{\begin{eqnarray*}}
\newcommand{\eeqn}{\end{eqnarray*}}
\newcommand{\bdf}{\begin{df}}
\newcommand{\bdfhf}{\begin{df}\hfill}
\newcommand{\edf}{\end{df}}
\newcommand{\brmk}{\begin{rmk}}
\newcommand{\brmkhf}{\begin{rmk}\hfill}
\newcommand{\ermk}{\end{rmk}}
\newcommand{\mrm}{\mathrm}
   \newcommand{\BR}{\mathbf{R}}
  \newcommand{\CF}{\mathcal{F}}
\newcommand{\CO}{\mathcal{O}}  
\newcommand{\CQ}{\mathcal{Q}}
\newcommand{\CY}{\mathcal{Y}}  
 \newcommand{\FX}{\mathfrak{X}}
\newcommand{\BBG}{\mathbb{G}}
\newcommand{\rma}{\mathrm{a}}
\newcommand{\rmb}{\mathrm{b}}
\newcommand{\olf}{\overline{f}}
\newcommand{\olK}{\overline{K}}
\newcommand{\olL}{\overline{L}}
\newcommand{\olU}{\overline{U}}
\newcommand{\olY}{\overline{Y}}
\newcommand{\olZ}{\overline{Z}}
\newcommand{\A}{\mathbb{A}}                                                    
\newcommand{\Z}{\mathbb{Z}}                                                    
\newcommand{\QZ}{\mathbb{Q}/\mathbb{Z}}                                        
\newcommand{\al}{\alpha}                                                       
\newcommand{\og}{\omega}                                                       
\newcommand{\Ra}{\Rightarrow}                                                  
\newcommand{\coLR}{\textcolor[rgb]{1.00,0,0}}                                  
\newcommand{\ol}{\overline}                                                    
\newcommand{\wt}{\widetilde}                                                   
\newcommand{\ce}{\colonequals}                                                 
\newcommand{\es}{\varnothing}                                                  
\newcommand{\uu}{^{\times}}                                                    
\newcommand{\uun}{^{(1)}}                                                      
\newcommand{\lip}{\langle}                                                     
\newcommand{\rip}{\rangle}                                                     
\newcommand{\drl}{\varinjlim}                                                  
\newcommand{\tstprodlim}{\tst\prod\limits}                                     
\DeclareMathOperator{\pic}{Pic}                                                
\DeclareMathOperator{\br}{Br}                                                  
\DeclareMathOperator{\Image}{Im}                                               
\renewcommand{\Im}{\Image}                                                     
\DeclareMathOperator{\Ker}{Ker}                                                
\renewcommand{\ker}{\Ker}                                                      
\DeclareMathOperator{\e}{Spec}                                                 
\DeclareMathOperator{\codim}{codim}                                            
\newcommand{\nr}{{\mathrm{nr}}}                                                
\DeclareMathOperator{\ots}{\otimes}                                            
\DeclareMathOperator{\otsZ}{\otimes_{\Z}}                                      
\newcommand{\gm}{\BBG_m}                                                       
\newcommand{\hnr}{H_{\nr}}                                                     
\newcommand{\cmdm}{commutative diagram~}
\newcommand{\distri}{distinguished triangle~}
\newcommand{\inpart}{In particular,~}
\newcommand{\ttes}{exact sequence~}
\newcommand{\ses}{short exact sequence~}
\newcommand{\wrt}{with respect to~}
\newcommand{\HSerre}{Hochschild--Serre~}
\newcommand{\TateSha}{Tate--Shafarevich~}                                      
\newcommand{\qand}{\quad\ \textup{and}\quad\ }                                 
\newcommand{\itm}{\item}
\newcommand{\tst}{\textstyle}
\newcommand{\pf}{\proof}
\newcommand{\pff}{\proof\hfill}
\newcommand{\vem}{\vspace{1em}}
\begin{document}
\title{\textbf{A Comparison of Cohomological Obstructions to the Hasse Principle and to Weak Approximation}}

\author{Yisheng TIAN}

\maketitle

\begin{abstract}
We show that certain \TateSha groups are unramified
which enables us to give an obstruction to the Hasse principle for torsors under tori over $p$-adic function fields.
\end{abstract}


\section{Introduction}
Let $K$ be the function field of a smooth projective geometrically integral curve $X$ over a $p$-adic field $k$.
In recent years,
Harari--Scheiderer--Szamuely \cite{HSSz15} obtained an explicit description of a cohomological obstruction
to weak approximation for $K$-tori
and also related this obstruction to an unramified cohomology group
(called the reciprocity obstruction).
In the meantime,
Harari--Szamuely \cite{HSz16} constructed an obstruction to the Hasse principle for $K$-torsors under tori.
By construction of \cite{HSz16}*{pp.~16-17 and Theorem 4.1},
the obstruction to the Hasse principle is essentially coming from the global duality for $K$-tori,
and they showed that this obstruction given by a group determined by "local triviality" is the only one.
However, the local triviality condition increases the difficulty of computing the obstruction.
This encourages us to find an obstruction which is easier to compute.

On the other hand,
the reciprocity obstruction is widely used in the investigation of the Hasse principle
and
intuitively the obstruction introduced in \cite{HSz16} should be compatible
with the reciprocity obstruction.
These known results also motivate us to build a connection between the obstruction in \cite{HSz16}
and the reciprocity obstruction.
Moreover, another benefit of this attempt is that the unramified cohomology group
is indeed easier to handle (for example, it vanishes in several known cases).
At this stage,
we would like to find out the relation between
the obstruction in \cite{HSz16}
and
the unramified cohomology group used in reciprocity obstruction \cite{HSSz15}*{Theorem 4.1}).

Actually the question that whether the canonical image of a certain map is unramified
was first raised by Colliot-Th\'el\`ene
(see also \cite{CT15}*{Remarque 4.3(b)}).
Later \cite{Tia21WA}*{Appendix} obtained a partial answer when the torsor is trivial.
Therefore it is an interesting question to describe the cohomological obstruction to the Hasse principle
using unramified cohomology groups for general torsors under tori.

Let us make the above statements more precise.
Let $X\uun$ be the set of closed points on $X$.
The local ring $\CO_{X,v}$ at $v\in X\uun$ is a discrete valuation ring,
so we may denote by $K_v$ the completion of $K$ with respect to $v\in X\uun$.
Let $Y$ be a $K$-torsor under a $K$-torus $T$.
We put $Y_v\ce Y\times_KK_v$.
Let $\CY$ be a smooth integral separated $X_0$-scheme such that $\CY\times_{X_0}\e K\simeq Y$ for some sufficiently small non-empty open subset $X_0\subset X$.
We define the adelic points on $Y$
(which does not depend on the choice of the model $\CY$) by
\[
Y(\A_K)\ce\drl_{U\subset X_0}\big(\tstprodlim_{v\notin U}Y(K_v)\times \tstprodlim_{v\in U}\CY(\CO_v)\big).
\]
In \cite{HSz16}*{page 15-16}, Harari and Szamuely constructed a map
$\rho_Y:H^3_{\mrm{lc}}(Y,\QZ(2))\to \QZ$
where
\[
H^3_{\mrm{lc}}(Y,\QZ(2))
\ce
\ker\big(\textstyle \frac{H^3(Y,\QZ(2))}{\Im H^3(K,\QZ(2))}\to \prod\limits_{v\in X\uun}\frac{H^3(Y_v,\QZ(2))}{\Im H^3(K_v,\QZ(2))}\big)
\]
gives the only obstruction to the Hasse principle as follows.

\begin{thm*}
[Harari--Szamuely, \cite{HSz16}*{Theorem 5.1}]
\label{result: obs to HP by Harari--Szamuely}
Let $Y$ be a $K$-torsor under a torus $T$ such that $Y(\A_K)\ne\es$.
If $\rho_Y$ is identically zero, then $Y(K)\ne\es$.
\end{thm*}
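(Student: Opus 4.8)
The plan is to run the descent argument of Sansuc for torsors under tori, with the Brauer group replaced by the higher group $H^3(-,\QZ(2))$ and classical Poitou--Tate duality replaced by the arithmetic duality for tori over the $p$-adic function field $K$. First I would record the standard reduction. Since $Y$ is a torsor under $T$, its class $[Y]\in H^1(K,T)$ vanishes iff $Y(K)\neq\es$, and because $Y(\A_K)\neq\es$ the class $[Y]$ dies in every $H^1(K_v,T)$, so $[Y]$ lies in the (finite) group $\Sha^1(K,T)\ce\ker\big(H^1(K,T)\to\prod_{v\in X\uun}H^1(K_v,T)\big)$. Also $\rho_Y$ is well defined under the hypothesis $Y(\A_K)\neq\es$: if $\alpha|_{Y_v}$ is pulled back from $\beta_v\in H^3(K_v,\QZ(2))$, then $y_v^*\alpha=\beta_v$ is independent of the local point $y_v$, and $\sum_v\mathrm{inv}_v(\beta_v)$ converges. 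It therefore suffices to prove that $\rho_Y\equiv0$ forces $[Y]=0$ in $\Sha^1(K,T)$. (The converse implication is immediate: if $y_0\in Y(K)$ then $y_0^*\alpha\in H^3(K,\QZ(2))$ is global, so $\rho_Y(\alpha)=\sum_v\mathrm{inv}_v(y_0^*\alpha|_{K_v})=0$ by the reciprocity sequence for $H^3(K,\QZ(2))$.)

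The key input is the arithmetic duality for $K$-tori: there is a perfect pairing of finite groups $\Sha^1(K,T)\times\Sha^2(K,\wh T\otimes\QZ(1))\to\QZ$, where $\wh T=\Hom_{\ol K}(\ol T,\gm)$ is the character module. It is assembled from Kato's local duality $H^3(K_v,\QZ(2))\xrightarrow{\ \sim\ }\QZ$ for the two-dimensional local fields $K_v$ (which also yields, for $T$, local dualities in complementary degrees), finiteness of the relevant Tate--Shafarevich groups, and the global reciprocity sequence $H^3(K,\QZ(2))\to\bigoplus_v H^3(K_v,\QZ(2))\to\QZ\to0$. I treat this duality as known; the substance here is to reinterpret $\rho_Y$ as the functional $\xi\mapsto\langle[Y],\xi\rangle$ on $\Sha^2(K,\wh T\otimes\QZ(1))$.

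For that I would first analyse $H^3_{\mrm{lc}}(Y,\QZ(2))$ through the Hochschild--Serre spectral sequence $E_2^{p,q}=H^p(K,H^q(\ol Y,\QZ(2)))\Rightarrow H^{p+q}(Y,\QZ(2))$, using $\cd(K)=3$ (so $E_2^{p,0}=0$ for $p\geq4$), the computation $H^q(\ol Y,\QZ(2))\cong\Lambda^q\wh T\otimes\QZ(2-q)$ (Kummer theory on the split torus $\ol Y\cong\ol T$, translations acting trivially on cohomology), and --- crucially --- the fact that the local spectral sequences degenerate because $Y(K_v)\neq\es$, so the pullback $H^3(K_v,\QZ(2))\to H^3(Y_v,\QZ(2))$ is split injective and every locally constant class $\alpha$ satisfies $\alpha|_{Y_v}\in\Im\big(H^3(K_v,\QZ(2))\to H^3(Y_v,\QZ(2))\big)$. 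One checks that such an $\alpha$ has trivial image in $H^3(\ol Y,\QZ(2))$ and that its $E^{2,1}$-component is represented by a class in $\Sha^2(K,\wh T\otimes\QZ(1))$, producing a canonical homomorphism $H^3_{\mrm{lc}}(Y,\QZ(2))\to\Sha^2(K,\wh T\otimes\QZ(1))$ whose kernel and cokernel are controlled by lower-weight Tate--Shafarevich terms such as $\Sha^1(K,\Lambda^2\wh T\otimes\QZ)$. Then I would prove the compatibility by a local computation: for $(y_v)\in Y(\A_K)$ and $\alpha\in H^3_{\mrm{lc}}(Y,\QZ(2))$ with $E^{2,1}$-image $\xi$, the class $\beta_v=y_v^*\alpha$ equals the local cup product $[Y_v]\cup\xi_v$ (pulling back along a point of a torsor realises cup product with its class), so $\rho_Y(\alpha)=\sum_v\mathrm{inv}_v([Y_v]\cup\xi_v)=\langle[Y],\xi\rangle$, while $\rho_Y$ kills the classes coming from the auxiliary layers, which are sufficiently ``global'' for reciprocity to apply. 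Hence $\rho_Y\equiv0$ forces $[Y]$ to be orthogonal to all of $\Sha^2(K,\wh T\otimes\QZ(1))$, and non-degeneracy of the duality pairing gives $[Y]=0$ in $\Sha^1(K,T)$, i.e. $Y(K)\neq\es$.

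I expect the main obstacle to be precisely this bridge between the concretely defined $H^3_{\mrm{lc}}(Y,\QZ(2))$, $\rho_Y$ and the abstract duality: controlling the higher Leray layers $H^q(\ol Y,\QZ(2))$ for $q\geq2$ (e.g. the $\Sha^1(K,\Lambda^2\wh T\otimes\QZ)$ contribution) so that the local-triviality condition genuinely isolates $\Sha^2(K,\wh T\otimes\QZ(1))$, and pinning down the local identification $\beta_v=[Y_v]\cup\xi_v$ with the correct Tate twist and degree so that $\sum_v\mathrm{inv}_v(\beta_v)$ is literally the arithmetic duality pairing. If the duality theorem for $K$-tori is not taken as given, then establishing it --- Kato/Saito local duality, finiteness of $\Sha$-groups for these fields, and the Cassels--Tate style assembly of the global pairing --- is the deeper difficulty on which everything rests.
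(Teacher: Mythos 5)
Your overall strategy --- Hochschild--Serre for $Y$, the identification $H^1(\ol{Y},\QZ(2))\cong\wh{T}\otimes\QZ(1)$ (the torsion of the dual torus $T'$), the perfect duality $\Sha^1(T)\times\Sha^2(T')\to\QZ$, and the compatibility of $\rho_Y$ with that pairing via a cup-product computation --- is exactly the circle of ideas behind Harari--Szamuely's proof as summarised in this paper. But there is a genuine logical gap in the way you run it: you construct a map $H^3_{\mrm{lc}}(Y,\QZ(2))\to\Sha^2(K,\wh{T}\otimes\QZ(1))$, $\al\mapsto\xi$, and then conclude that $\rho_Y\equiv0$ makes $[Y]$ orthogonal to \emph{all} of $\Sha^2$. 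That conclusion requires this map to be surjective, and you never establish surjectivity; its cokernel is entangled with precisely the layers you flag as the ``main obstacle'' ($E_\infty^{0,3}$, $E_\infty^{1,2}=H^1(K,\Lambda^2\wh{T}\otimes\QZ)$, and the differential $d_2^{0,2}$), and there is no a priori reason these contributions vanish. Even defining the $E^{2,1}$-component of a locally constant class requires first showing it lies in $F^2H^3$, i.e.\ that its images in $E_\infty^{0,3}$ \emph{and} $E_\infty^{1,2}$ vanish, which you only assert.

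The actual argument avoids all of this by reversing the arrow. Since $\QZ(2)$ is torsion and $\cd(K)=3$, one has $E_2^{4,0}=H^4(K,\QZ(2))=0$, so the edge map $E_2^{2,1}=H^2(K,H^1(\ol{Y},\QZ(2)))\to H^3(Y,\QZ(2))/\Im H^3(K,\QZ(2))$ is defined on the whole of $E_2^{2,1}\cong H^2(K,T')$; restricting to $\Sha^2(T')$ and using functoriality of Hochschild--Serre over each $K_v$ shows the image is locally constant, which gives the map $\tau:\Sha^2(T')\to H^3_{\mrm{lc}}(Y,\QZ(2))$. Then the single identity $\rho_Y(\tau(\al))=\pm\lip[Y],\al\rip$ for all $\al\in\Sha^2(T')$ (your local cup-product computation; this is Proposition 5.3 of Harari--Szamuely and is the real content) together with perfectness of the pairing yields $[Y]=0$ directly: no surjectivity statement, no control of the higher Leray layers, and no analysis of the kernel or cokernel of any comparison map is needed. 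I recommend restructuring your argument around $\tau$ rather than its would-be inverse; the remaining ingredients you list (local duality for the two-dimensional local fields $K_v$, the reciprocity sequence, finiteness of $\Sha^1(T)$) are then used only inside the two black boxes you already identify.
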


Actually, during the proof of the theorem,
Harari and Szamuely constructed a finer obstruction than that given by $H^3_{\mrm{lc}}(Y,\QZ(2))$.
Indeed,
the \HSerre spectral sequence
$
E^{p,q}_2\ce H^p(K,H^q(\ol{Y},\QZ(2)))\Ra H^{p+q}\ce H^{p+q}(Y,\QZ(2))
$
yields a map
$
H^2(K,H^1(\ol{Y},\QZ(2)))\to H^3(Y,\QZ(2))/\Im H^3(K,\QZ(2))
$
(where we have used implicitly the fact that the cohomological dimension of $K$ is $3$).
Moreover, \cite{HSz16}*{Lemma 5.2} allows one to identify
$H^2(K,H^1(\ol{Y},\QZ(2)))$
with
$H^2(K,T')$
where $T'$ is the dual torus of $T$
(i.e. $T'$ is the torus such that its module of characters is the module of cocharacters of $T$).
Let $\Sha^2(T')$ be the group of everywhere locally trivial elements of $H^2(K,T')$.
Restricting to the subgroup $\Sha^2(T')$ of $H^2(K,T')$ yields a map
\[
\tau:\Sha^2(T')\to H^3_{\mrm{lc}}(Y,\QZ(2)).
\]
Let $Y$ be a $T$-torsor such that $Y(\A_K)\ne\es$.
Since $Y(\A_K)\ne\es$ is equivalent to $Y(K_v)\ne\es$ for all $v\in X\uun$,
$Y(\A_K)\ne\es$ implies that the class $[Y]\in H^1(K,T)$ actually lies in $\Sha^1(T)$.
Now we arrive at:

\begin{prop*}
[Harari--Szamuely, \cite{HSz16}*{Proposition 5.3}]
Let $Y$ be a $T$-torsor such that $Y(\A_K)\ne\es$.
Then
\[
\rho_Y\circ\tau(\al)=\lip [Y],\al \rip
\]
holds up to sign for all $\al\in\Sha^2(T')$,
where $\lip-,-\rip$ is the global duality pairing $\Sha^1(T)\times \Sha^2(T')\to \QZ$ $($see \cite{HSz16}*{Theorem 4.1}$)$.
\end{prop*}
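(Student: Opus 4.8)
The plan is to unwind the three ingredients --- the evaluation map $\rho_Y$, the edge map $\tau$, and the global duality pairing $\langle-,-\rangle$ --- at the level of cocycles, and then to match the local contributions of $\rho_Y\circ\tau(\al)$ and of $\langle[Y],\al\rangle$ place by place. Recall first that $\rho_Y$ is computed by fixing $(P_v)_v\in Y(\A_K)$, choosing for $z\in H^3_{\mrm{lc}}(Y,\QZ(2))$ a lift $\tilde z\in H^3(Y,\QZ(2))$, and setting $\rho_Y(z)=\sum_v\inv_v(P_v^*\tilde z)$ with $\inv_v\colon H^3(K_v,\QZ(2))\isoto\QZ$; the defining local condition on $z$ makes $P_v^*\tilde z$ depend only on $v$ (so the sum is independent of $(P_v)$), and $\sum_v\inv_v\circ\res_v=0$ on $H^3(K,\QZ(2))$ makes it independent of $\tilde z$ (\cite{HSz16}*{pp.~15--16}). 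Recall next that $\tau$ is the composite
\[
\Sha^2(T')\hra H^2(K,T')\isoto E_2^{2,1}\epi E_\infty^{2,1}\hra H^3(Y,\QZ(2))/\Im H^3(K,\QZ(2)),
\]
where $E_2^{2,1}=H^2(K,H^1(\ol Y,\QZ(2)))$, the second arrow is \cite{HSz16}*{Lemma 5.2}, the surjection onto $E_\infty^{2,1}$ uses $\cd K=3$, and the last inclusion comes from the Hochschild--Serre filtration on $H^3(Y,\QZ(2))/\Im H^3(K,\QZ(2))$; the image lies in $H^3_{\mrm{lc}}$ because $\al|_{K_v}=0$ for every $v$. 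Finally, by construction in \cite{HSz16}*{Theorem 4.1} the pairing $\langle-,-\rangle$ is computed by the Poitou--Tate recipe: it is $\sum_v\inv_v$ applied to local cup-product cocycles assembled from fixed global cocycles for $[Y]$ and $\al$ together with local cochain-splittings of these over each $K_v$ (which exist precisely because $[Y]\in\Sha^1(T)$ and $\al\in\Sha^2(T')$).

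For the reduction step, fix $(P_v)_v$ and a lift $\tilde z$ of $\tau(\al)$. Since $\al|_{K_v}=0$ in $H^2(K_v,T')$, functoriality of the Hochschild--Serre edge map along $\Spec K_v\to\Spec K$ shows that $\tau(\al)$ restricts to $0$ in $H^3(Y_v,\QZ(2))/\Im H^3(K_v,\QZ(2))$; hence $\tilde z|_{Y_v}=\pi_v^*\beta_v$ for a unique $\beta_v\in H^3(K_v,\QZ(2))$ (here $\pi_v\colon Y_v\to\Spec K_v$, and uniqueness holds because $Y(K_v)\ne\es$ makes $\pi_v^*$ injective). Then $P_v^*\tilde z=P_v^*\pi_v^*\beta_v=\beta_v$, so $\rho_Y(\tau(\al))=\sum_v\inv_v(\beta_v)$, and the proposition reduces to showing that, for one compatible choice of the global and local data above, $\inv_v(\beta_v)$ coincides --- up to a sign independent of $v$, $\al$ and $Y$ --- with the $v$-th local summand in the Poitou--Tate formula for $\langle[Y],\al\rangle$.

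To identify $\beta_v$ I would pass to an explicit total complex computing $H^*(Y,\QZ(2))$ out of the Hochschild--Serre double complex. Representing $\al$ by a $2$-cocycle $a_\bullet$ on $\gal_K$ valued in $H^1(\ol Y,\QZ(2))$, and using the canonical identification $H^1(\ol Y,\QZ(2))=H^1(\ol T,\QZ(2))$ (valid because translations act trivially on the cohomology of $\ol T$) together with \cite{HSz16}*{Lemma 5.2}, one threads the usual zig-zag through the double complex to produce a cocycle on $Y$ representing $\tilde z$. Now restrict to $K_v$ and compare the $\ol K$-trivialization of $\ol Y$ used to write that cocycle with the torsor trivialization of $Y_v$ furnished by $P_v$: over $\ol{K_v}$ the two differ by translation by the $\ol{K_v}$-point recorded in the local splitting $x_v$ of $[Y]$ (i.e. $\partial x_v=c|_{K_v}$ for $c$ a global cocycle representing $[Y]$). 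Transporting the $\ol Y$-cocycles through this translation and using the local splitting $y_v$ of $\al$ (with $\partial y_v=a_\bullet|_{K_v}$), the restricted total cocycle becomes, modulo a total-complex coboundary, the $\pi_v$-pullback of an explicit $3$-cocycle on $\gal_{K_v}$, and the bookkeeping exhibits that cocycle as the local Poitou--Tate contribution to $\langle[Y],\al\rangle$. Summing over $v$ then yields $\rho_Y\circ\tau(\al)=\pm\langle[Y],\al\rangle$.

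The hard part is precisely this last cocycle identification. One must track how the tautological classes on $\ol Y$ transform under the translation relating the two trivializations, and check that after trivializing the $H^1(\ol Y,\QZ(2))$-component the residual cocycle is exactly the Poitou--Tate one rather than that cocycle modified by a further total-complex coboundary; in doing so one must also reconcile all the Tate twists so that $H^1(\ol Y,\QZ(2))$, the torsion subgroup of the dual torus $T'$, and the local target $H^3(K_v,\QZ(2))\cong\QZ$ are used consistently (which is where \cite{HSz16}*{Lemma 5.2} has to be invoked carefully). Equivalently, the proof amounts to verifying the commutativity, up to a single overall sign, of a sizeable diagram linking the Hochschild--Serre edge map for $Y$, the connecting homomorphisms defining the pairing of \cite{HSz16}*{Theorem 4.1}, base change to $K_v$, and evaluation at $P_v$; pinning down that overall sign is the source of the ``up to sign'' in the statement.
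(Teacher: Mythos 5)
This proposition is quoted by the paper from \cite{HSz16}*{Proposition 5.3} and the paper supplies no proof of it (it is used as an input to motivate the main theorem), so there is no internal argument to compare yours against; I can only assess your attempt on its own terms.

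Your setup and reduction are correct and match the standard framework: since $\al\in\Sha^2(T')$ and the Hochschild--Serre edge maps are functorial in the base field, $\tau(\al)$ lands in $H^3_{\mrm{lc}}(Y,\QZ(2))$; a lift $\tilde z$ satisfies $\tilde z|_{Y_v}=\pi_v^*\beta_v$ with $\beta_v$ unique because $Y(K_v)\ne\es$ makes $\pi_v^*$ split injective; and $\rho_Y(\tau(\al))=\sum_v\inv_v(\beta_v)$, with independence of the choices following from reciprocity for $H^3(K,\QZ(2))$. The genuine gap is that essentially all of the mathematical content of the proposition is concentrated in the one step you defer: the identification of $\inv_v(\beta_v)$ with the $v$-th local term of the pairing $\lip[Y],\al\rip$. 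The passage about threading the zig-zag through the Hochschild--Serre double complex, transporting cocycles through the translation relating the two trivializations of $\ol{Y}$ over $\ol{K_v}$, and ``the bookkeeping exhibits that cocycle as the local Poitou--Tate contribution'' asserts the desired conclusion rather than establishing it; as written it is a plan for a computation, not the computation. Two further points would need attention even to execute that plan: (i) you implicitly assume the pairing of \cite{HSz16}*{Theorem 4.1} is \emph{defined} by the explicit local cochain-splitting recipe you describe, whereas if it is constructed abstractly (by duality over the curve) you owe a separate comparison between the abstract pairing and the cochain recipe; and (ii) the claim that the sign discrepancy is a single overall sign independent of $v$, $\al$ and $Y$ is exactly the kind of statement that can only be certified by completing the cocycle identification. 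Until that central compatibility is carried out, the proof is incomplete.
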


The previous theorem follows immediately from the precedent proposition
together with the fact that the global duality pairing $\Sha^1(T)\times \Sha^2(T')\to \QZ$ is perfect.
In this way, we obtain a cohomological obstruction to the Hasse principle given by the image of $\Sha^2(T')$ in $H^3_{\mrm{lc}}(Y,\QZ(2))$.
Thus the image of $\Sha^2(T')$ is the crucial part of the obstruction to the Hasse principle.

As for weak approximation,
Harari, Scheiderer and Szamuely announced that the defect to weak approximation for tori
can be described by $\Sha^2_{\og}(T')$,
where $\Sha^2_{\og}(T')$ denotes the subgroup of $H^2(K,T')$
consisting of elements vanishing in $H^2(K_v,T')$ for all but finitely many $v\in X\uun$.
To this end, they constructed a pairing (see \cite{HSSz15}*{\S4, pp.~18})
\[
(-,-)_{\mrm{WA}}:\tstprodlim_{v\in X^{(1)}}Y(K_v)\times \hnr^3(K(Y),\QZ(2))\to \QZ.
\]
Here $\hnr^3(K(Y),\QZ(2))$ denotes the unramified subgroup of $H^3(K(Y),\QZ(2))$
which contains the canonical image of $H^3(K,\QZ(2))$ in $H^3(K(Y),\QZ(2))$.
See \cite{CT95SBB}*{\S2 and \S4} for general properties of unramified elements and unramified cohomology, respectively.

\begin{thm*}
[Harari--Scheiderer--Szamuely, \cite{HSSz15}*{Theorem 4.2}]
There is a homomorphism
\[
u:\Sha^2_{\og}(T')\to \hnr^3(K(T),\QZ(2))
\]
such that each family $(t_v)\in T(K_v)$ annihilated by $(-,\Im u)_{\mrm{WA}}$
lies in the closure $\ol{T(K)}$ of $T(K)$ in $\prod T(K_v)$ \wrt the product topology.
\end{thm*}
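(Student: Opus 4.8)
The plan is to extract $u$ from the Hochschild--Serre spectral sequence, to identify pairing against $\Im u$ under $(-,-)_{\mrm{WA}}$ with a sum of local cup-product pairings, and then to deduce the statement from the arithmetic duality for tori over $K$.

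\textbf{Construction of $u$.} Consider $E_2^{p,q}=H^p(K,H^q(\ol T,\QZ(2)))\Rightarrow H^{p+q}(T,\QZ(2))$. Since $\cd K=3$, the differentials leaving $E_2^{2,1}$ vanish, so the spectral sequence yields a homomorphism $E_2^{2,1}\to H^3(T,\QZ(2))/\Im H^3(K,\QZ(2))$; precomposing with the identification $E_2^{2,1}=H^2(K,H^1(\ol T,\QZ(2)))\cong H^2(K,T')$ of \cite{HSz16}*{Lemma~5.2} gives $\theta\colon H^2(K,T')\to H^3(T,\QZ(2))/\Im H^3(K,\QZ(2))$. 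The unit section $e\colon\Spec K\to T$ splits the pullback along $p\colon T\to\Spec K$, so $H^3(T,\QZ(2))/\Im H^3(K,\QZ(2))\cong\ker(e^*)$ canonically, and restriction to the generic point of $T$ turns $\theta$ into a homomorphism $H^2(K,T')\to H^3(K(T),\QZ(2))$. On $\Sha^2_\og(T')$ this lands in the unramified subgroup: an element of $\Sha^2_\og(T')$ spreads out, together with its local triviality, over $T\times_X X_0$ for some dense open $X_0\subseteq X$ (discard the finitely many places of local nontriviality), so $\theta(\al)$ extends over a regular model of $T$ over $X_0$ and has vanishing residues along every divisor. This gives $u\colon\Sha^2_\og(T')\to\hnr^3(K(T),\QZ(2))$.

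\textbf{The pairing identity.} The heart of the matter is the local computation that, for $v\in X\uun$, $t_v\in T(K_v)$ and $\al\in\Sha^2_\og(T')$, the value $u(\al)(t_v)\in H^3(K_v,\QZ(2))=\QZ$ obtained by pulling $\theta(\al)$ back along $t_v$ equals $\pm\langle t_v,\al_v\rangle_v$, where $\langle-,-\rangle_v\colon T(K_v)\times H^2(K_v,T')\to\QZ$ is the cup-product pairing assembled from the Kummer maps $T(K_v)/n\hra H^1(K_v,T[n])$ and local duality $H^1(K_v,T[n])\times H^2(K_v,T'[n])\to\Z/n$ for the two-dimensional local field $K_v$. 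Indeed $\theta$ is, modulo the identifications above, cup product with the tautological class $\kappa\in H^1(T,H^1(\ol T,\QZ(2)))$ classifying the isogenies $[n]\colon T\to T$; pullback along $t_v$ commutes with cup products and carries $\kappa$ to the Kummer class of $t_v$, which is precisely the adjunction, under local duality, between the Hochschild--Serre edge map and the connecting map. Summing over $v$ and unwinding the definition of $(-,-)_{\mrm{WA}}$ (evaluate at each $t_v$, add the local invariants) gives
\[
((t_v)_v,\,u(\al))_{\mrm{WA}}=\pm\sum_{v\in X\uun}\langle t_v,\al_v\rangle_v
\]
for all $(t_v)\in\prod_v T(K_v)$ and $\al\in\Sha^2_\og(T')$, a finite sum since $\al_v=0$ for almost all $v$. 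Thus $(t_v)$ is annihilated by $(-,\Im u)_{\mrm{WA}}$ exactly when $\sum_v\langle t_v,\al_v\rangle_v=0$ for every $\al\in\Sha^2_\og(T')$.

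\textbf{Identifying the kernel.} It remains to prove that the left kernel of $((t_v),\al)\mapsto\sum_v\langle t_v,\al_v\rangle_v$ on $\prod_v T(K_v)\times\Sha^2_\og(T')$ is exactly $\ol{T(K)}$. The inclusion $\ol{T(K)}\subseteq(\text{left kernel})$ is the reciprocity law: for $t\in T(K)$ these local invariants are the components of a single class in $H^3(K,\QZ(2))$, whose invariants sum to zero, and the pairing is continuous. For the reverse inclusion I would use arithmetic duality over $K$: via a flasque resolution $1\to S\to P\to T\to1$ with $P$ quasi-trivial (so weak approximation for $P$ is automatic and the defect for $T$ is governed by $H^1(K,S)$ and its localizations), the Kummer sequences for $S,P,T$ and their local analogues, local duality at each $K_v$, and the reciprocity law for $H^3(K,\QZ(2))$, one shows that $\prod_v T(K_v)/\ol{T(K)}$ is finite and that the pairing above induces a perfect duality between it and $\Sha^2_\og(T')$; hence the left kernel is $\ol{T(K)}$. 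In substance this is \cite{HSSz15}*{Theorem~4.1}.

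\textbf{Main difficulty.} The work is concentrated in the last step: assembling the Poitou--Tate-type arithmetic duality for a curve over a $p$-adic field, and then controlling the passage to the limit over $n$ --- the product versus restricted-product topologies on $\prod_v T(K_v)$, the profinite completions $T(K_v)^{\wedge}$ whose divisible parts lie automatically in $\ol{T(K)}$, and the identification of $\Sha^2_\og(T')$ with the corresponding limit of finite-level groups. By comparison, the construction of $u$ and the local identity of the second step are formal once the Tate twists and the tautological class $\kappa$ are pinned down.
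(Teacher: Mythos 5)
This statement is quoted in the paper as a known result of Harari--Scheiderer--Szamuely (\cite{HSSz15}*{Theorem 4.2}); the paper itself contains no proof of it, so there is nothing internal to compare your argument against line by line. Judged on its own terms, your outline of the overall strategy (Hochschild--Serre edge map, local cup-product compatibility, reduction to the arithmetic duality of \cite{HSSz15}*{Theorem 4.1}) is the right shape, and deferring the duality statement itself to Theorem 4.1 is legitimate since Theorem 4.2 is proved on top of it.

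There is, however, a genuine gap exactly at the step you treat as routine: the claim that $\theta(\al)$ is unramified for $\al\in\Sha^2_{\og}(T')$. Your argument --- spread $\al$ and its local triviality out over $T\times_X X_0$ for a dense open $X_0\subset X$ and conclude that ``$\theta(\al)$ has vanishing residues along every divisor'' --- conflates two different families of valuations. Membership in $\hnr^3(K(T),\QZ(2))$ requires vanishing residues at all discrete valuations of the function field $K(T)$, in particular at the divisorial valuations trivial on $K$ such as those given by the components of $T^c\setminus T$ for a smooth compactification $T^c$; these have nothing to do with the closed points $v\in X\uun$ at which $\al_v$ vanishes, so local triviality at almost all $v$ gives no control over them. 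This is precisely the difficulty the present paper is about: the question of whether this image is unramified was raised by Colliot-Th\'el\`ene, was settled for the trivial torsor only in \cite{Tia21WA}*{Appendix}, and the extension to arbitrary torsors is \cref{result: Sha-2 of tori non-ram general}, proved via a purity/compactification argument (comparing $H^3$ of $Y$ with $H^3$ of $Y^c$ through the strata $U_i$, then invoking \cite{CT95SBB}*{Proposition 2.1.8} and properness of $Y^c$). Some argument of this kind --- showing the class comes from $H^3(T^c,\QZ(2))$, or otherwise computing residues along the boundary --- is needed where you currently have one sentence; as written, that sentence does not prove what it asserts.
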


With this point of view, we are interested in the image of
$H^3_{\mrm{lc}}(Y,\QZ(2))\to H^3(K(Y),\QZ(2))/H^3(K,\QZ(2))$.
We prove that this map has unramified image\footnote{Throughout, the unramified part of $H^3(K(Y),\QZ(2))/H^3(K,\QZ(2))$ will mean the quotient $\hnr^3(K(Y),\QZ(2))/H^3(K,\QZ(2))$.} and hence the obstruction to the Hasse principle will be easier to handle.
The first result is the following:

\begin{thm*}
[Tian, \cite{Tia21WA}*{Appendix}]
The image of $\Sha^2_{\og}(T')\to H^3(K(T),\QZ(2))/H^3(K,\QZ(2))$ is unramified,
i.e. its image lies in the subquotient $\hnr^r(K(T),\QZ(2))/H^3(K,\QZ(2))$.
\end{thm*}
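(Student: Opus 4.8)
The plan is to show that the class $\xi_\alpha\in H^3(K(T),\QZ(2))/H^3(K,\QZ(2))$ attached to $\alpha\in\Sha^2_{\og}(T')$ lies in the subquotient $\hnr^3(K(T),\QZ(2))/H^3(K,\QZ(2))$, i.e. (since residues at valuations trivial on $K$ annihilate $H^3(K,\QZ(2))$) that it has vanishing residue at every discrete valuation of $K(T)$ trivial on $K$. By construction $\xi_\alpha$ is the restriction to the generic point of a class $\tilde\xi_\alpha\in H^3(T,\QZ(2))$, well defined modulo $\Im H^3(K,\QZ(2))$, obtained as the image of $\alpha$ under $H^2(K,T')\cong H^2(K,H^1(\ol T,\QZ(2)))\to H^3(T,\QZ(2))/\Im H^3(K,\QZ(2))$, where the first arrow is the identification of \cite{HSz16}*{Lemma~5.2} (under which $H^1(\ol T,\QZ(2))\cong X^*(T)\otimes\QZ(1)$) and the second is the Hochschild--Serre edge map. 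Since $\tilde\xi_\alpha$ lives on the smooth $K$-variety $T$, purity forces it to be unramified at every codimension-one point of $T$; so only the divisorial valuations of $K(T)/K$ whose centre is not on $T$ remain to be treated. To organize these I would fix a smooth projective equivariant (toric) compactification $T\hookrightarrow T^c$ over $K$ (available since $K$ has characteristic zero, from a complete regular Galois-invariant fan in $X_*(T)_\R$), with boundary $T^c\setminus T=\bigsqcup_i D_i$; each $D_i$ corresponds to a Galois orbit of rays, is defined over the finite subextension $K_i/K$ stabilising that ray, and carries a primitive generator $\lambda_i\in X_*(T)^{G_{K_i}}=X^*(T')^{G_{K_i}}$, i.e. a character $\lambda_i\colon T'_{K_i}\to\gm$. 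Because unramified cohomology may be computed on any single smooth projective model, it suffices to prove $\partial_{D_i}(\xi_\alpha)=0$ for every $i$.

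The core computation is the boundary residue. Near the generic point of $D_i$ the toric variety $T^c$ is an open subscheme of the total space of a line bundle over the quotient torus $T_0\ce T/\lambda_i(\gm_{K_i})$ over $K_i$, with $D_i$ the zero section and $T$ its complement. Choosing a uniformizer and using the resulting Künneth (projective-bundle) splitting of $H^3$ near $\eta_{D_i}$, together with the compatibility of the residue with the Hochschild--Serre spectral sequence and with the explicit shape of $\tilde\xi_\alpha$, one identifies $\partial_{D_i}(\xi_\alpha)$ with the image in $\br(\kappa(D_i))=\br(K_i(T_0))$ of the Brauer class $(\lambda_i)_*(\alpha|_{K_i})\in\br(K_i)$, where $(\lambda_i)_*$ is induced by the character $\lambda_i$. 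Since $T_0$ is smooth and has a rational point, $\br(K_i)\hookrightarrow\br(K_i(T_0))$, so it is enough to show $(\lambda_i)_*(\alpha|_{K_i})=0$ in $\br(K_i)$.

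For this last step both hypotheses enter. As $\alpha\in\Sha^2_{\og}(T')$ is locally trivial outside a finite set of places of $K$, its restriction $\alpha|_{K_i}$ is locally trivial outside a finite set of places of $K_i$, hence so is $(\lambda_i)_*(\alpha|_{K_i})$; that is, $(\lambda_i)_*(\alpha|_{K_i})\in\Sha^2_{\og}(\gm/K_i)$. But $K_i$ is again the function field of a smooth projective curve over a $p$-adic field, and for such a field $\Sha^2_{\og}(\gm/K_i)=0$: no nonzero Brauer class is locally trivial at almost all places — this is exactly where the $p$-adic base is used, via Lichtenbaum-type duality for the curve (equivalently local duality for its Jacobian). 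Therefore $(\lambda_i)_*(\alpha|_{K_i})=0$, every boundary residue vanishes, and $\xi_\alpha$ is unramified. I expect the technical heart to be the residue computation of the second paragraph — reconciling the local toric picture at $\eta_{D_i}$ with the Künneth decomposition and the Hochschild--Serre description of $\tilde\xi_\alpha$, tracking Tate twists and the descent to $K_i$ — while the vanishing $\Sha^2_{\og}(\gm/K_i)=0$, although essentially known, also deserves a careful statement, in particular for curves without rational points.
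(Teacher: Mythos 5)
Your argument is correct, and at its core it runs on the same two inputs as the paper's proof of the general case (\cref{result: Sha-2 of tori non-ram general} specialised to $Y=T$): a smooth equivariant compactification $T\subset T^c$ whose boundary divisors correspond to Galois orbits of rays with primitive cocharacters $\lambda_i\in X_*(T)=X^*(T')$ defined over finite extensions $K_i$, and the vanishing $\Sha^2_{\og}(\gm)=0$ over each $K_i$ (\cref{lemma: purity I}\ref{Pure3}, via \cite{HSSz15}*{Lemma 3.2(a)}). Your residue formula $\partial_{D_i}(\xi_\alpha)=\mathrm{res}\big((\lambda_i)_*(\alpha|_{K_i})\big)$ is precisely the content of the commutative diagram of \cref{lemma: 2 by 2 by 3 diagram}: the geometric purity boundary $H^1(\ol{T},\CQ(2))=X^*(T)\otimes\CQ(1)\to H^0(\ol{Z}_1,\CQ(1))$ is contraction with the $\lambda_i$, and compatibility with Hochschild--Serre gives your identification without needing the local line-bundle/K\"unneth analysis. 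Where you genuinely diverge is in upgrading ``all boundary residues vanish'' to ``unramified'': you stop at codimension-one points of one smooth proper model and invoke the Bloch--Ogus description of $\hnr^3$, whereas the paper lifts the class to $H^3(T^c,\CQ(2))$ --- which is why it must also control the codimension $\ge 2$ strata via \cref{lemma: inductive isomorphism by purity} and \cref{lemma: purity I}\ref{Pure2} --- and then quotes \cite{CT95SBB}*{Proposition 2.1.8}. Your route is shorter but leans on the Gersten/Bloch--Ogus machinery; the paper's is more self-contained and transports verbatim to nontrivial torsors $Y$ under $T$, where explicit toric coordinates are less convenient. Two minor points to tighten: $H^0(\ol{Z}_1,\CQ(1))$ is a sum of induced modules $\Ind_{K_i}^{K}\CQ(1)$, so the reduction to $\Sha^2_{\og}(\gm/K_i)=0$ should pass through Shapiro's lemma together with the (easy) observation that restriction maps $\Sha^2_{\og}(T'/K)$ into $\Sha^2_{\og}(T'/K_i)$; and the injectivity $\br(K_i)\hookrightarrow\br(K_i(T_0))$ is not actually needed for the implication you draw, since you only use that $(\lambda_i)_*(\alpha|_{K_i})=0$ forces its image to vanish.
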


In particular, the image of $\Sha^2(T')\to H^3_{\mrm{lc}}(T,\QZ(2))\to H^3(K(T),\QZ(2))/H^3(K,\QZ(2))$ is unramified.
This positive answer suggests us to generalize the result to $K$-torsors under $T$.
Now we arrive at the main result:

\begin{thm*}
[\cref{result: Sha-2 of tori non-ram general}]
The image of
$
\Sha^2_{\og}(T')\to H^3(Y,\QZ(2))/\Im H^3(K,\QZ(2))
$
is unramified for any $K$-torsor $Y$ under $T$.
\end{thm*}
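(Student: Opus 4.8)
The idea is to reduce the general torsor case to the already-established case $Y = T$ from the Tian result quoted above, using a twisting argument. Recall that when $Y(\A_K) \ne \es$, the class $[Y] \in H^1(K,T)$ lies in $\Sha^1(T)$, so $Y$ becomes isomorphic to $T$ after a faithfully flat base change; but we cannot simply pull back, since the isomorphism is not defined over $K$. Instead, the plan is to exploit that the geometric cohomology $H^q(\ol Y, \QZ(2))$ and $H^q(\ol T, \QZ(2))$ are \emph{canonically} identified as Galois modules (a torsor under $T$ is, geometrically, just $T$, and the torus action trivializes the relevant cohomology), so the Hochschild--Serre spectral sequences for $Y$ and for $T$ have the same $E_2$-page $E_2^{p,q} = H^p(K, H^q(\ol T, \QZ(2)))$. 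First I would make this identification precise, recording in particular that the edge map $H^2(K, H^1(\ol Y, \QZ(2))) = H^2(K,T') \to H^3(Y,\QZ(2))/\Im H^3(K,\QZ(2))$ factors the map $\tau$ we want to control, exactly as in the case $Y=T$.

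Next I would analyze the \emph{ramification} of a class coming from $\Sha^2_{\og}(T')$ at a closed point $v \in X^{(1)}$, i.e. its image under the residue map attached to a codimension-one point of a model of $Y$. Ramification is a local (even local-at-$v$) phenomenon, and the key point is that $Y_v = Y \times_K K_v$ is a torsor under $T_v$ that is \emph{trivial} whenever $Y(K_v) \ne \es$ — which holds for all $v$ since $Y(\A_K) \ne \es$. Thus, for every $v$, there is an isomorphism $Y_v \simeq T_v$ of $K_v$-varieties, hence a compatible isomorphism $K_v(Y) \simeq K_v(T_v)$ of function fields, under which the residue maps on $H^3(K(Y),\QZ(2))$ at places above $v$ correspond to the residue maps on $H^3(K(T),\QZ(2))$. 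Since the spectral-sequence edge maps are natural, the image of $\alpha \in \Sha^2_{\og}(T')$ in $H^3(Y,\QZ(2))/\Im H^3(K,\QZ(2))$, restricted to $K_v(Y)$, is carried to the image of the same $\alpha$ in $H^3(T,\QZ(2))/\Im H^3(K,\QZ(2))$ restricted to $K_v(T)$. By the Tian result (applied over $K$, then localized at $v$), the latter is unramified at all codimension-one points lying over $v$; therefore so is the former.

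The remaining step is to organize a \emph{local-to-global} descent: unramifiedness of a class in $H^3(K(Y),\QZ(2))/H^3(K,\QZ(2))$ at a point $y$ of codimension one on a regular model $\CY$ over $X$ is detected after completing at the prime of $X$ below $y$ (one uses that $\CO_{\CY,y}$ is a discrete valuation ring with a uniformizer pulled back, or at worst a regular two-dimensional local ring whose residue computation can be checked over the completion of the base), so the place-by-place statement of the previous paragraph yields the global unramifiedness. Concretely, for each $v \in X^{(1)}$ one fibers $\CY$ over $X$, completes, invokes the local triviality $Y_v \simeq T_v$, and applies the already-known case; the finitely many "bad" places where the model degenerates are handled by shrinking $X_0$ and noting that the residue there vanishes for the same reason it does in the $Y=T$ case (the argument in \cite{Tia21WA}*{Appendix} is itself local and only uses properties of $T$ that survive the twist over $K_v$).

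\textbf{Main obstacle.} The delicate point is the compatibility in the second paragraph: one must check that the canonical geometric identification $H^q(\ol Y, \QZ(2)) \cong H^q(\ol T, \QZ(2))$ is compatible with the \emph{local} trivializations $Y_v \simeq T_v$ in a way that makes the residue-map diagram commute — i.e. that the non-canonical choices made place-by-place do not interfere with the canonical global Galois-module identification of the $E_2$-pages. Handling this requires care about how the $T$-action is used to kill the dependence on the trivialization (the torus acts trivially on its own cohomology in the relevant degrees), and this is where I expect the real work to lie; the rest is naturality of Hochschild--Serre and a standard patching of residues over the base curve.
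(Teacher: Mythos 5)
Your proposal takes a genuinely different route from the paper, and it has a gap I do not see how to close. The paper proves the statement by showing that the image of $\Sha^2_{\og}(T')$ in $H^3(Y,\QZ(2))/\Im H^3(K,\QZ(2))$ lifts to $H^3(Y^c,\QZ(2))$ for the smooth compactification $Y^c=Y\times^TT^c$; unramifiedness is then automatic from properness of $Y^c$ by \cite{CT95SBB}*{Proposition 2.1.8}. The lifting is obtained from purity (Gysin) sequences for the stratification $U_0\subset U_1\subset U_2\subset Y^c$ attached to a $T$-equivariant compactification, the vanishing $\Sha^2_{\og}(H^0(\ol{Z}_1,\CQ(1)))=0$, and a diagram chase with truncations of $\BR\olf_{U_i*}\CQ(2)$. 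No hypothesis on $Y(\A_K)$ is used.

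The gap in your plan is the ``local-to-global descent'' of the third paragraph. Unramifiedness of a class in $H^3(K(Y),\QZ(2))/H^3(K,\QZ(2))$ is a condition on \emph{all} divisorial discrete valuations of $K(Y)$, and the critical ones are precisely those that are \emph{trivial on} $K$ --- for instance the generic points of the boundary components of $Y^c\setminus Y$, which is exactly where a class living only on the open torsor would typically ramify. Such valuations are not ``detected after completing at the prime of $X$ below'': there is no prime of $X$ below them. Even for valuations $w$ that do restrict to a place $v\in X\uun$, concluding that the residue at $w$ vanishes from its vanishing after base change to $K_v$ requires injectivity of $H^2(\kappa(w),\QZ(1))\to H^2(\kappa(w_v),\QZ(1))$ for residue fields of positive transcendence degree over $K$, which you neither state nor prove and which does not follow from $\Sha^2_{\og}(\gm)=0$. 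Moreover, ``the Tian result applied over $K$, then localized at $v$'' is not a well-defined operation: unramified cohomology does not commute with the base change $K\to K_v$, so the known statement over the $p$-adic function field $K$ does not automatically yield its analogue over the two-dimensional local field $K_v$. Finally, your reduction needs $Y(K_v)\ne\es$ for every $v$, whereas the theorem is asserted (and proved in the paper) for an arbitrary torsor $Y$. The compatibility issue you single out as the ``main obstacle'' is comparatively harmless, since translations act trivially on the cohomology of $\ol{T}$; the real obstacle is that ramification along the boundary of $Y$ is not a place-by-place phenomenon over $X$.
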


The idea of the proof is simple:
we show that the image of $\Sha^2_{\og}(T')$ in $H^3(Y,\QZ(2))/\Im H^3(K,\QZ(2))$
comes from $H^3(Y^c,\QZ(2))/\Im H^3(K,\QZ(2))$
where $Y^c$ denotes a smooth compactification of $Y$
(see \cite{Kollar09}*{Theorem 3.21 and Corollary 3.22} for the existence of $Y^c$).
Thus the image of $\Sha^2_{\og}(T')$ in $H^3(K(Y),\QZ(2))/H^3(K,\QZ(2))$ is unramified
by \cite{CT95SBB}*{Proposition 2.1.8} and the properness of $Y^c$.
To this end, we use purity sequences to relate the cohomology of $Y$ with that of $Y^c$
via a commutative diagram (\cref{lemma: 2 by 2 by 3 diagram}).

\vem
\textbf{Acknowledgements}.
I thank my PhD supervisor David Harari for many useful discussions and helpful comments.
I thank Yang Cao for his help on the proof of the main theorem.
I thank the referees for valuable comments.
Finally, I thank Universit\'e Paris-Saclay and Southern University of Science and Technology for excellent conditions for research.

\section{Main results}
Let $T^c$ be a $T$-equivariant smooth projective compactification of $T$ over $K$
(see \cite{CHS05}*{Corollaire 1}).
Let $T\subset V_i\subset T^c$ be the open subset of $T^c$ consisting of $T$-orbits such that $\codim(T^c\setminus V_i,T^c)\ge i$.
Let $Y^c=Y\times^TT^c$ and let $U_i=Y\times^TV_i\subset Y^c$.
Fix an algebraic closure $\olK$ of $K$
and let $\ol{\FX}$ be the base change of a $K$-scheme $\FX$ to $\olK$.
So $U_0=Y$ by construction and $\ol{Y^c}$ is cellular by \cite{Cao18nr}*{Proposition 2.5(3)}.
For $i\ge 1$, $Z_i:=U_i\setminus U_{i-1}$ is smooth of codimension $i$ in $U_i$.

We begin with the computation of some cohomology groups via purity
and then
deduce a \cmdm which tells us $\Sha^2_{\og}(T')$ is unramified.
Throughout we shall simply write $\CQ(i)\ce \QZ(i)$ for $i\in\Z$.

\begin{lem}\label{lemma: inductive isomorphism by purity}
Suppose either $(\rma)$ $0\le r\le 4$ and $i\ge 2$,
or $(\rmb)$ $0\le r\le 2$ and $i\ge 1$.
There are isomorphisms
\[
H^r(U_i,\CQ(2))\simeq H^r(Y^c,\CQ(2))
\qand
H^r(\ol{U}_i,\CQ(2))\simeq H^r(\ol{Y^c},\CQ(2)).
\]
\end{lem}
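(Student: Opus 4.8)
The plan is to descend from the smooth projective compactification $Y^c$ to the open piece $U_i$ one orbit-stratum at a time, using the purity (localization) long exact sequences attached to the smooth closed immersions $Z_j\hookrightarrow U_j$, and to exploit that after the purity shift the relevant local cohomology groups sit in degrees that are negative in the range of $r$ we care about, hence vanish.

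First I would record the set-up: $U_j=Y^c$ for $j\ge\dim T$ (once $V_j=T^c$), and for every $j\ge 1$ the stratum $Z_j=U_j\setminus U_{j-1}$ is smooth and closed of codimension $j$ in $U_j$, so $U_{j-1}=U_j\setminus Z_j$; the same holds over $\olK$, since smoothness and codimension survive base change. Then I would invoke absolute cohomological purity (the Gysin isomorphism) for $\CQ(2)=\QZ(2)$ on the smooth pair $(U_j,Z_j)$, which gives, for all $s$,
\[
H^s_{Z_j}(U_j,\CQ(2))\;\simeq\;H^{s-2j}(Z_j,\CQ(2-j)),
\]
and likewise for $(\ol{U}_j,\ol{Z}_j)$. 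Since étale hypercohomology is concentrated in non-negative degrees, the right-hand side — hence $H^s_{Z_j}(U_j,\CQ(2))$ — vanishes whenever $s<2j$.

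Next I would feed this into the localization exact sequence
\[
\cdots\to H^s_{Z_j}(U_j,\CQ(2))\to H^s(U_j,\CQ(2))\to H^s(U_{j-1},\CQ(2))\to H^{s+1}_{Z_j}(U_j,\CQ(2))\to\cdots :
\]
for $s\le 2j-2$ both outer terms vanish (as $s<2j$ and $s+1<2j$), so $H^s(U_j,\CQ(2))\simeq H^s(U_{j-1},\CQ(2))$, and the same over $\olK$. Composing these isomorphisms for $j$ running from $\dim T$ (where $U_{\dim T}=Y^c$) down to $i+1$ yields $H^r(U_i,\CQ(2))\simeq H^r(Y^c,\CQ(2))$ and $H^r(\ol{U}_i,\CQ(2))\simeq H^r(\ol{Y^c},\CQ(2))$, valid as soon as $r\le 2j-2$ for every $j$ with $i+1\le j\le\dim T$. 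The binding constraint comes from the smallest such $j$, namely $j=i+1$, and reads $r\le 2i$. If $i\ge 2$ this interval contains $0\le r\le 4$, giving $(\rma)$; if $i\ge 1$ it contains $0\le r\le 2$, giving $(\rmb)$. (When $i\ge\dim T$ the chain is empty, $U_i=Y^c$, and the statement is vacuous.)

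The one point requiring care will be invoking purity with the correct twist: for $j=2$ the coefficient $\CQ(2-j)$ is the constant sheaf $\QZ$, and for $j\ge 3$ it is a negative Tate twist of $\QZ$, so I would check the Gysin isomorphism (and the vanishing in negative degrees) still apply in this form — they do, since we work over fields of characteristic $0$ with smooth schemes. I do not expect any substantive obstacle beyond this bookkeeping; the essential inputs are just the orbit stratification of $T^c$ (already recorded above) and absolute cohomological purity. If one wished to avoid negative twists, one could instead note that $\ol{Z}_j$ is a disjoint union of copies of $\gm^{\dim T-j}$ and compute $H^*(\ol{Z}_j,\CQ(2-j))$ by Künneth — it is concentrated in degrees $0$ through $\dim T-j$ — which re-derives the required vanishing, but this refinement is unnecessary for the degrees at issue.
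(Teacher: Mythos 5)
Your proof is correct and follows essentially the same route as the paper: the Gysin/localization exact sequence for each stratum $Z_j\subset U_j$, with the outer (purity-shifted) terms vanishing because they sit in negative degrees, composed along the stratification from $Y^c$ down to $U_i$; the binding constraint $r\le 2(i+1)-2$ at the first step reproduces exactly the two cases $(\rma)$ and $(\rmb)$. The only (immaterial) differences are cosmetic — you phrase the vanishing via local cohomology $H^s_{Z_j}$ rather than citing the already-shifted sequence, and your top index for the chain ($U_{\dim T}=Y^c$ rather than $U_{\dim T+1}$) is off by one, which does not affect the argument.
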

\begin{proof}
The purity of $U_{i-1}\subset U_{i}\supset Z_{i}$ for $i\ge 1$ yields exact sequences (and similar sequences over $\ol{K}$)
\be\label{sequence: l-e-s for purity pair of codim i}
H^{r-2i}(Z_{i},\CQ(2-i))\to H^r(U_{i},\CQ(2))\to H^r(U_{i-1},\CQ(2))\to H^{r+1-2i}(Z_{i},\CQ(2-i))
\ee
by \cite{Fu11}*{Corollary 8.5.6}.
Thus for $r\le 4$ and $i\ge 3$, there are isomorphisms
\be\label{isomorphism: by purity}
H^r(U_{i},\CQ(2))\to H^r(U_{i-1},\CQ(2))
\ee
by the \ttes (\ref{sequence: l-e-s for purity pair of codim i}).
In particular, applying (\ref{isomorphism: by purity}) inductively yields isomorphisms
$H^r(U_2,\CQ(2))\simeq H^r(Y^c,\CQ(2))$ and $H^r(\ol{U}_2,\CQ(2))\simeq H^r(\ol{Y^c},\CQ(2))$ for $r\le 4$.
For case $(\rmb)$, since $\codim(Z_2,U_2)=2$,
the \ttes (\ref{sequence: l-e-s for purity pair of codim i}) for $0\le r\le 2$ and $U_1\subset U_2\supset Z_2$ implies
$H^r(U_2,\CQ(2))\simeq H^r(U_1,\CQ(2))$ and $H^r(\ol{U}_2,\CQ(2))\simeq H^r(\ol{U}_1,\CQ(2))$.
Therefore we conclude $H^r(U_1,\CQ(2))\simeq H^r(U_2,\CQ(2))\simeq H^r(Y^c,\CQ(2))$ and similarly over $\olK$.
\end{proof}

\begin{rmk}\label{remark: vanishing of odd degree cohomology}
Since $\ol{Y^c}$ is a projective cellular variety, $H^i(\ol{Y^c},\CQ(2))=0$ for $i=2n-1$ with $n\ge 1$ an integer
(see \cite{Fulton84}*{Example 19.1.11} or \cite{Cao18nr}*{Th\'eor\`eme 2.6}).
Moreover, thanks to $\br(\ol{Y^c})=0$ (since $\ol{Y^c}$ is smooth projective rational)
and the Kummer sequence
\[
0\to H^1(\ol{Y^c},\gm)/n\to H^2(\ol{Y^c},\mu_n)\to {_n}H^2(\ol{Y^c},\gm)\to 0,
\]
we obtain an isomorphism of Galois modules $H^2(\ol{Y^c},\CQ(1))\simeq\pic(\ol{Y^c})\otsZ\QZ$ after taking direct limit over all $n$.
Over the algebraically closed field $\olK$,
we can (non-canonically) identify $\CQ(1)$ with $\CQ(2)$,
so there is an isomorphism $H^2(\ol{Y^c},\CQ(2))\simeq\pic(\ol{Y^c})\otsZ\QZ$ of abelian groups.
\end{rmk}

\begin{lem}\label{lemma: purity I}\hfill
\benuma
\item\label{Pure1}
We have $H^1(\ol{U}_1,\CQ(2))=0$.
The map
$H^2(\ol{U}_1,\CQ(2))\to H^2(\ol{U}_0,\CQ(2))$ induced by $U_0\subset U_1$ is identically zero.
\inpart there is an \ttes of Galois modules:
\be\label{sequence: s-e-s of Galois modules I}
0\to H^1(\ol{U}_0,\CQ(2))\to H^0(\ol{Z}_1,\CQ(1))\to H^2(\ol{U}_1,\CQ(2))\to 0.
\ee

\item\label{Pure2}
We have
\[
\Im\big(H^3(Y^c,\CQ(2))\to H^3(U_1,\CQ(2))\big)
=\ker\big(H^3(U_1,\CQ(2))\to H^3(\ol{U}_1,\CQ(2))\big).
\]
Therefore
\be\label{identity: kernel v-s image}
\tst\Im\big(\frac{H^3(Y^c,\CQ(2))}{H^3(K,\CQ(2))}\to \frac{H^3(U_1,\CQ(2))}{H^3(K,\CQ(2))}\big)
=\ker\big(\frac{H^3(U_1,\CQ(2))}{H^3(K,\CQ(2))}\to H^3(\ol{U}_1,\CQ(2))\big).
\ee

\item\label{Pure3}
Consider the diagonal map $\Delta:H^2(K,H^0(\ol{Z}_1,\CQ(1)))\to \prod_vH^2(K_v,H^0(\ol{Z}_1,\CQ(1)))$
and write the image $(\al_v)\ce \Delta(\al)$ of $\al\in H^2(K,H^0(\ol{Z}_1,\CQ(1)))$ under $\Delta$ into a family of local elements.
Put
\[
\Sha^2_{\og}(H^0(\ol{Z}_1,\CQ(1)))
\ce\{\al\ |\ \text{$\al_v=0$ for all but finitely many $v\in X\uun$}\}.
\]
Then $\Sha^2_{\og}(H^0(\ol{Z}_1,\CQ(1)))=0$ and in particular $\Delta$ is injective.
\eenum
\end{lem}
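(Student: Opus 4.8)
The plan is to prove the three assertions in order, the first two feeding into the third, where all the genuine content lies. \emph{First assertion.} By \cref{lemma: inductive isomorphism by purity}, case $(\rmb)$ (with $r=1$, $i=1$), $H^1(\ol U_1,\CQ(2))\cong H^1(\ol{Y^c},\CQ(2))$, and the target vanishes since $\ol{Y^c}$ is projective and cellular (\cref{remark: vanishing of odd degree cohomology}, as $1=2\cdot1-1$). Feeding $H^{-1}(\ol Z_1,\CQ(1))=0$ and this vanishing into the purity sequence \eqref{sequence: l-e-s for purity pair of codim i} for $\ol U_0\subset\ol U_1\supset\ol Z_1$ yields an exact sequence
\[
0\to H^1(\ol U_0,\CQ(2))\to H^0(\ol Z_1,\CQ(1))\to H^2(\ol U_1,\CQ(2))\xra{h}H^2(\ol U_0,\CQ(2)),
\]
so it remains to check $h=0$. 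The map $h$ is restriction along $\ol U_0=\ol Y\hra\ol U_1$; using \cref{lemma: inductive isomorphism by purity}, case $(\rmb)$, together with \cref{remark: vanishing of odd degree cohomology}, $H^2(\ol U_1,\CQ(2))\cong H^2(\ol{Y^c},\CQ(2))$ is, after a non-canonical twist, the group $\pic(\ol{Y^c})\otsZ\QZ$ via the cycle-class map, so the image of $h$ lies in the span of cycle classes of divisors on $\ol Y$, which is $0$ because $\pic(\ol Y)=0$ ($\ol Y\cong\ol T$ being a torus). Thus $h=0$, giving \eqref{sequence: s-e-s of Galois modules I}; this is a sequence of $G_K$-modules since the purity sequence over $\olK$ is $G_K$-equivariant and the vanishing of $h$ does not depend on the auxiliary twist.

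\emph{Second assertion.} The inclusion $U_1\hra Y^c$ factors through $U_2$, and $H^3(Y^c,\CQ(2))\xra{\sim}H^3(U_2,\CQ(2))$ by \cref{lemma: inductive isomorphism by purity}, case $(\rma)$ (with $r=3$, $i=2$); hence $\Im\big(H^3(Y^c,\CQ(2))\to H^3(U_1,\CQ(2))\big)=\Im\big(H^3(U_2,\CQ(2))\to H^3(U_1,\CQ(2))\big)$. In the purity sequence \eqref{sequence: l-e-s for purity pair of codim i} of $U_1\subset U_2\supset Z_2$ the term preceding $H^3(U_2)\to H^3(U_1)$ is $H^{-1}(Z_2,\CQ(0))=0$, so this image equals $\ker\big(H^3(U_1,\CQ(2))\xra{\partial}H^0(Z_2,\CQ(0))\big)$, where $\partial$ is the Gysin boundary. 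The same sequence over $\olK$, together with $H^3(\ol U_2,\CQ(2))\cong H^3(\ol{Y^c},\CQ(2))=0$, gives an injection $H^3(\ol U_1,\CQ(2))\hra H^0(\ol Z_2,\CQ(0))$. Since $\partial$ is compatible with base change to $\olK$ and $H^0(Z_2,\CQ(0))\hra H^0(\ol Z_2,\CQ(0))$ (it compares sets of connected components), a short chase yields $\ker(\partial)=\ker\big(H^3(U_1,\CQ(2))\to H^3(\ol U_1,\CQ(2))\big)$, which is the first displayed identity in the statement. The second follows because $H^3(K,\CQ(2))\to H^3(U_1,\CQ(2))$ factors through $H^3(Y^c,\CQ(2))$ while its composite into $H^3(\ol U_1,\CQ(2))$ factors through $H^3(\olK,\CQ(2))=0$.

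\emph{Third assertion.} Since $\ol Z_1$ is a finite disjoint union of geometrically connected tori which $G_K$ permutes (one for each codimension-one orbit of $\ol{T^c}$), one has $M\ce H^0(\ol Z_1,\CQ(1))\cong\bigoplus_O\Ind_{\gal(\Ksep|L_O)}^{\gal(\Ksep|K)}\CQ(1)$, the sum running over the $G_K$-orbits $O$ of these orbits, with $L_O/K$ the residue extension of $O$. Shapiro's lemma and the compatibility of restriction with $K_v\otimes_KL_O=\prod_{w\mid v}(L_O)_w$ then give $H^2(K,M)=\bigoplus_O\br(L_O)$ and, under this identification, $\Sha^2_\og(M)=\bigoplus_O\Sha^2_\og(\CQ(1))_{L_O}$, each $L_O$ being again the function field of a smooth projective geometrically integral curve over a $p$-adic field. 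So the claim reduces to showing $\Sha^2_\og(\CQ(1))_L=\{\beta\in\br(L):\beta_w=0\text{ for all but finitely many }w\}=0$ for such an $L$. Such a $\beta$ is unramified outside a finite set $S$ of closed points of the model $X_L$, so by the residue sequence $0\to\br(X_L)\to\br(L)\to\bigoplus_wH^1(\kappa(w),\QZ)$ we may assume $\beta\in\br(X_L)$ with $\beta(w)=0$ in $\br(\kappa(w))$ for every $w\notin S$. Pairing $\beta$ against degree-zero divisors, the reciprocity law makes $\sum_wn_w[w]\mapsto\sum_wn_w\cores_{\kappa(w)|k_L}\beta(w)$ a functional on $\pic^0(X_L)=\jac(X_L)(k_L)$ which, by Tate local duality, is the image $\bar\beta$ of $\beta$ in $H^1(k_L,\jac(X_L))$; since degree-zero divisors supported away from $S$ still generate $\pic^0(X_L)$ and all $\beta(w)$ with $w\notin S$ vanish, $\bar\beta=0$, so $\beta=\mathrm{pr}^*\alpha_0$ for some $\alpha_0\in\br(k_L)$. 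Then $\beta(w)=[\kappa(w):k_L]\,\alpha_0$, and its vanishing for $w\notin S$ forces $\ord(\alpha_0)$ to divide $\gcd_{w\notin S}[\kappa(w):k_L]$, which equals the index $I$ of $X_L$ — removing finitely many closed points does not change this gcd, since a closed point of minimal degree already produces infinitely many closed points of that degree. As $\ker(\br(k_L)\to\br(X_L))$ contains $\tfrac{1}{I}\Z/\Z$ by the index description of this kernel for curves over $p$-adic fields, it follows that $\beta=0$. Hence $\Sha^2_\og(M)=0$, and $\Delta$ is injective.

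The third assertion is the crux. The reduction via Shapiro's lemma is formal, but the vanishing of $\Sha^2_\og(\CQ(1))_L$ for function fields of $p$-adic curves genuinely uses the arithmetic of their Brauer groups — the reciprocity law, Tate local duality for the Jacobian, and the index description of $\ker(\br(k_L)\to\br(X_L))$ — and one must take care with the two stability statements "removing a finite set of closed points changes neither the gcd of their degrees nor the image of degree-zero divisors in $\jac(X_L)(k_L)$". The first two assertions, by contrast, are routine manipulations of the purity sequences once \cref{lemma: inductive isomorphism by purity} and \cref{remark: vanishing of odd degree cohomology} are available.
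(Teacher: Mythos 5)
Your parts (a) and (b) follow the paper's own route essentially verbatim: the same purity sequences, the same appeals to \cref{lemma: inductive isomorphism by purity} and \cref{remark: vanishing of odd degree cohomology}, and for the vanishing of $H^2(\ol{U}_1,\CQ(2))\to H^2(\ol{U}_0,\CQ(2))$ the same mechanism (identify $H^2(\ol{Y^c},\CQ(2))$ with $\pic(\ol{Y^c})\otsZ\QZ$ using $\br(\ol{Y^c})=0$, then use $\pic(\ol{T})=0$). In part (c) your reduction is actually more careful than the paper's: you correctly treat $H^0(\ol{Z}_1,\CQ(1))$ as a sum of induced modules $\Ind_{L_O}^{K}\CQ(1)$ and invoke Shapiro's lemma, where the paper loosely calls it ``a direct sum of copies of $\CQ(1)$.'' The divergence is in the base case: the paper identifies $H^2(L,\CQ(1))\simeq\br L$ and then simply \emph{cites} $\Sha^2_{\og}(\gm)=0$ from \cite{HSSz15}*{Lemma 3.2(a)}, whereas you attempt to reprove it.

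That reproof has a genuine gap at its first step. From $\beta_w=0$ in $\br(L_w)$ for all $w\notin S$ you may conclude that the residues $\partial_w\beta$ vanish for $w\notin S$, i.e.\ $\beta\in\br(X_L\setminus S)$; this does \emph{not} put $\beta$ in $\br(X_L)$, because $\beta$ may be ramified at the points of $S$, and the residue sequence you display provides no way to ``assume $\beta\in\br(X_L)$'' (there is no splitting of the residue map over a general curve, and subtracting any ramified correction term would destroy the hypothesis $\beta_w=0$ for $w\notin S$). Everything downstream --- the functional on $\pic^0(X_L)$, Tate duality for the Jacobian, and $\ker\big(\br(k_L)\to\br(X_L)\big)=\tfrac{1}{I}\Z/\Z$ --- requires $\beta$ to be everywhere unramified, so as written you only prove the vanishing of $\Sha^2_{\og}(\gm)\cap\br(X_L)$. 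Dealing with the ramified part is exactly the remaining content of the cited lemma; either supply that argument or quote \cite{HSSz15}*{Lemma 3.2(a)} as the paper does. If you do keep the self-contained route, two further points you gesture at need to be made precise: $\pic^0(X_L)\to\jac(X_L)(k_L)$ need not be surjective without a rational point, and the claim that degree-zero divisors supported away from $S$ still generate $\pic^0(X_L)$ requires a moving argument.
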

\pff
\benuma
\item
By \cref{lemma: inductive isomorphism by purity} and \cref{remark: vanishing of odd degree cohomology},
we have $H^1(\ol{U}_1,\CQ(2))\simeq H^1(\ol{Y^c},\CQ(2))=0$.
Consider the following commutative diagram
\[
\xm@R10pt{
\pic(\ol{T^c})\ots\QZ\ar[r]\ar[d] & \pic(\ol{T})\ots\QZ\ar[d]\\
H^2(\ol{T^c},\CQ(2))\ar[r] & H^2(\ol{T},\CQ(2)).
}
\]
Here the vertical arrows are induced by the Kummer sequence $0\to \mu_n\to \gm\to \gm\to 0$
together with a fixed identification of $\CQ(1)$ and $\CQ(2)$ over $\olK$.
Note that $\br(\ol{T^c})=0$ and $\pic(\ol{T})=0$.
Since the left vertical arrow is an isomorphism by the vanishing of $\br(\ol{T^c})$,
we conclude that $H^2(\ol{Y^c},\CQ(2))\to H^2(\ol{U}_0,\CQ(2))$ is identically zero
because of the isomorphisms of varieties $\ol{Y^c}\simeq\ol{T^c}$ and $\ol{U}_0=\olY\simeq\ol{T}$.
Finally, the purity sequence (\ref{sequence: l-e-s for purity pair of codim i}) for $U_0\subset U_1\supset Z_1$ together with the above vanishing results imply the desired short exact sequence of Galois modules.

\item
The purity for $U_1\subset U_2\supset Z_2$ induces a commutative diagram with exact rows
\[
\xm@R10pt{
  H^3(U_2,\CQ(2))\ar[r]\ar[d] & H^3(U_1,\CQ(2))\ar[r]\ar[d] & H^0(Z_2,\CQ(0))\ar[d] \\
  H^3(\ol{U}_2,\CQ(2))\ar[r] & H^3(\ol{U}_1,\CQ(2))\ar[r] & H^0(\ol{Z}_2,\CQ(0)).
  }
\]
Note that the map $H^0(Z_2,\CQ(1))\to H^0(\ol{Z}_2,\CQ(1))$ is injective.
According to \cref{lemma: inductive isomorphism by purity} and \cref{remark: vanishing of odd degree cohomology}, we have
\[
H^3(\ol{U}_2,\CQ(2))\simeq H^3(\ol{Y^c},\CQ(2))=0.
\]
Thus a diagram chasing yields
\[
\Im\big(H^3(U_2,\CQ(2))\to H^3(U_1,\CQ(2))\big)=\ker\big(H^3(U_1,\CQ(2))\to H^3(\ol{U}_1,\CQ(2))\big).
\]
Recall that $H^3(U_2,\CQ(2))\simeq H^3(Y^c,\CQ(2))$ by \cref{lemma: inductive isomorphism by purity}, we are done.

\item
Note first that $H^0(\ol{Z}_1,\CQ(1))$ is isomorphic to a direct sum of copies of $\CQ(1)$ as Galois modules
and hence $\Sha^2_{\og}(H^0(\olZ_1,\CQ(1)))$ is a direct sum of copies of $\Sha_{\og}^2(\CQ(1))$.
For a field $L$ of characteristic zero,
we have a \ses $0\to \CQ(1)\to \olL\uu\to Q\to 0$ of Galois modules,
where the quotient $Q$ is uniquely divisible.
Taking cohomology yields an
isomorphism $H^2(L,\CQ(1))\simeq \br L$.
Finally,
applying $H^2(L,\CQ(1))\simeq \br L$ to $L=K, K_v$ and
taking $\Sha^2_{\og}(\gm)=0$ (\cite{HSSz15}*{Lemma 3.2(a)})
into account yield $\Sha^2_{\og}(\CQ(1))\simeq\Sha^2_{\og}(\gm)=0$.
\qed
\eenum

In the diagram below,
we denote by HU/HL, VF/VB/VL/VM/VR for the horizontal upper/lower, vertical front/back/left/middle/right face, respectively.
To avoid confusion, in VB we draw dashed vertical arrows.
So all the other faces are uniquely determined.

In the proof of \cref{result: Sha-2 of tori non-ram general}
we only need the commutativity of the left cube and the exactness of the upper row of HL in \cref{lemma: 2 by 2 by 3 diagram},
but we still construct and show the commutativity of the two cubes because all the involved arrows arise naturally. 

\begin{lem}\label{lemma: 2 by 2 by 3 diagram}
There is an exact \cmdm with surjective vertical arrows
\[
\xm@C=-30pt @R=10pt{
& H^3(K,\tau_{\le2}\BR \olf_{U_1*}\CQ(2)) \ar@{-->}[rr] \ar[ld] \ar@{-->}[dd]
&& H^3(K,\tau_{\le1}\BR \olf_{U_0*}\CQ(2)) \ar@{-->}[rr] \ar[ld] \ar@{-->}[dd]
&& H^2(K,\tau_{\le 0}\BR \olf_{Z_1*}\CQ(1)) \ar[ld] \ar@{-->}[dd]
\\
H^3(U_1,\CQ(2)) \ar[rr] \ar[dd]
&& H^3(U_0,\CQ(2)) \ar[rr] \ar[dd]
&& H^2(Z_1,\CQ(1)) \ar@{=}[dd] &
\\
& H^1(K,H^2(\ol{U}_1,\CQ(2))) \ar[rr] \ar[ld]
&& H^2(K,H^1(\ol{U}_0,\CQ(2))) \ar[rr] \ar[ld]
&& H^2(K,H^0(\ol{Z}_1,\CQ(1))) \ar[ld]
\\
\frac{H^3(U_1,\CQ(2))}{\Im H^3(K,\CQ(2))} \ar[rr]
&& \frac{H^3(U_0,\CQ(2))}{\Im H^3(K,\CQ(2))} \ar[rr]
&& H^2(Z_1,\CQ(1)). &
}
\]
\end{lem}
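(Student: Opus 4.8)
The plan is to deduce the four rows of the diagram from a handful of closely related distinguished triangles, all built from the Gysin triangle of the pair $Z_1\subset U_1$, and then to read off the remaining arrows and commutativities from morphisms between these triangles. Let $\olf_{U_1},\olf_{U_0},\olf_{Z_1}$ denote the structure morphisms to $\Spec K$, and $j\colon U_0\hra U_1$, $i\colon Z_1\hra U_1$ the open and closed immersions. Absolute purity gives the Gysin triangle $i_*\CQ(1)[-2]\to\CQ(2)\to\BR j_*\CQ(2)\xrightarrow{+1}$ on $U_1$, and pushing it forward by $\BR\olf_{U_1*}$ yields a triangle
\[
\BR\olf_{Z_1*}\CQ(1)[-2]\longrightarrow\BR\olf_{U_1*}\CQ(2)\longrightarrow\BR\olf_{U_0*}\CQ(2)\xrightarrow{+1}
\]
whose long exact sequence of cohomology sheaves is (\ref{sequence: l-e-s for purity pair of codim i}) over $\ol{K}$; the cohomology sheaves of its three terms are the Galois modules $H^{\bullet-2}(\ol{Z}_1,\CQ(1))$, $H^{\bullet}(\ol{U}_1,\CQ(2))$ and $H^{\bullet}(\ol{U}_0,\CQ(2))$. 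By \cref{lemma: inductive isomorphism by purity}, \cref{remark: vanishing of odd degree cohomology} and \cref{lemma: purity I}(a), the geometric inputs are: $H^1(\ol{U}_1,\CQ(2))=0$, $H^3(\ol{U}_1,\CQ(2))\simeq H^3(\ol{Y^c},\CQ(2))=0$, the transition map $H^2(\ol{U}_1,\CQ(2))\to H^2(\ol{U}_0,\CQ(2))$ vanishes, and the short exact sequence (\ref{sequence: s-e-s of Galois modules I}) holds.

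Set $\CA=\tau_{\le2}\BR\olf_{U_1*}\CQ(2)$, $\CB=\tau_{\le1}\BR\olf_{U_0*}\CQ(2)$ and $\CC=\tau_{\le0}\BR\olf_{Z_1*}\CQ(1)=H^0(\ol{Z}_1,\CQ(1))$. The first step is to construct a distinguished triangle $\CC[-2]\to\CA\xrightarrow{\phi}\CB\xrightarrow{+1}\CC[-1]$ and a morphism of triangles from it to the Gysin triangle, lifting the canonical maps $\CC[-2]\to\BR\olf_{Z_1*}\CQ(1)[-2]$ and $\CB\hra\BR\olf_{U_0*}\CQ(2)$ on the outer terms. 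Using that there are no nonzero morphisms from a complex concentrated in degrees $\le m$ to one concentrated in degrees $\ge n$ with $m<n$, and that $H^2(\ol{U}_1,\CQ(2))\to H^2(\ol{U}_0,\CQ(2))$ vanishes, the map $\BR\olf_{U_1*}\CQ(2)\to\BR\olf_{U_0*}\CQ(2)$ descends to $\phi\colon\CA\to\CB$ (this is where the truncation level drops from $2$ to $1$), and the Gysin map lifts to a comparison morphism $\tilde a\colon\CC[-2]\to\CA$ with $\phi\circ\tilde a=0$. Completing $\tilde a$ to a triangle, mapping it to the Gysin triangle, and computing cohomology sheaves --- the key point being that $\CH^2(\tilde a)$ is the geometric Gysin map, whose kernel is $H^1(\ol{U}_0,\CQ(2))$ and which surjects onto $H^2(\ol{U}_1,\CQ(2))$ by (\ref{sequence: s-e-s of Galois modules I}) --- identifies $\cone(\tilde a)$ with $\CB$ compatibly with the Gysin triangle, so that $\phi$ is the second map of the triangle. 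I expect this cone identification to be the main obstacle: one must match the extension $0\to H^1(\ol{U}_0,\CQ(2))\to\CH^1(\cone\tilde a)\to H^2(\ol{U}_1,\CQ(2))\to0$ with the one realised by $H^0(\ol{Z}_1,\CQ(1))$ in (\ref{sequence: s-e-s of Galois modules I}); everything afterwards is formal.

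Now apply $\BR\Gamma(K,-)$; since $\cd(K)=3$ and the complexes in play are bounded with torsion cohomology, $H^{>3}(K,-)$ vanishes on them. The triangle $\CC[-2]\to\CA\to\CB\to\CC[-1]$ yields the exact top row $H^3(K,\CA)\to H^3(K,\CB)\to H^2(K,\CC)$; the Gysin triangle yields the second row $H^3(U_1,\CQ(2))\to H^3(U_0,\CQ(2))\to H^2(Z_1,\CQ(1))$, which is part of (\ref{sequence: l-e-s for purity pair of codim i}) for $r=3$, $i=1$, and whose image in the quotients by $\Im H^3(K,\CQ(2))$ (a well-defined quotient by \cref{lemma: purity I}(b)) is the bottom row; and the truncation triangles $\tau_{\le k-1}\to\tau_{\le k}\to\CH^k[-k]$ of the three pushforwards --- together with $\tau_{\le1}\BR\olf_{U_1*}\CQ(2)\simeq\CQ(2)$ (by $H^1(\ol{U}_1,\CQ(2))=0$) and $\tau_{\le0}\BR\olf_{U_0*}\CQ(2)\simeq\CQ(2)$ --- give the surjections $H^3(K,\CA)\twoheadrightarrow H^1(K,H^2(\ol{U}_1,\CQ(2)))$ and $H^3(K,\CB)\twoheadrightarrow H^2(K,H^1(\ol{U}_0,\CQ(2)))$ (the vertical back arrows), each with kernel mapping onto $\Im H^3(K,\CQ(2))$ inside $H^3(U_i,\CQ(2))$. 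The third row $H^1(K,H^2(\ol{U}_1,\CQ(2)))\to H^2(K,H^1(\ol{U}_0,\CQ(2)))\to H^2(K,H^0(\ol{Z}_1,\CQ(1)))$ is the Galois cohomology sequence of (\ref{sequence: s-e-s of Galois modules I}), hence exact at its middle term.

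It remains to assemble the cube. The vertical front arrows are the quotient maps. The arrows $H^i(K,\CH^k(\cdot))\to H^3(U_i,\CQ(2))/\Im H^3(K,\CQ(2))$ in the lower slab are defined as the unique maps making each column commute, which is legitimate because the vertical back arrows are surjective and their kernels map into $\Im H^3(K,\CQ(2))$. Commutativity of the two horizontal faces of each cube is $\BR\Gamma(K,-)$ applied to the morphism of triangles of the first step; commutativity of the back faces reduces to the observation that, on $\tau_{\ge1}$, the morphism $\phi$ becomes --- up to shift --- the morphism $H^2(\ol{U}_1,\CQ(2))\to H^1(\ol{U}_0,\CQ(2))[1]$ classifying (\ref{sequence: s-e-s of Galois modules I}) (because $\tau_{\ge1}$ turns $\CC[-2]\to\CA\to\CB$ into the triangle attached to (\ref{sequence: s-e-s of Galois modules I})), so after $\BR\Gamma(K,-)$ it is cup product with that class, i.e.\ the connecting map of the third row. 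The faces VL, VM, VR commute by construction, and the remaining vertical faces then follow formally. This produces the whole diagram; for \cref{result: Sha-2 of tori non-ram general} only the commutativity of the left cube and exactness of the third row are used.
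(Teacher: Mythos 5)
Your proposal is correct and follows essentially the same route as the paper: the pushed-forward Gysin triangle, descent of the truncation level from $\tau_{\le 2}$ to $\tau_{\le 1}$ via the vanishing of $H^2(\ol{U}_1,\CQ(2))\to H^2(\ol{U}_0,\CQ(2))$, surjectivity of the vertical back arrows from $\cd(K)=3$, and the same commutativity checks via morphisms of (truncated) triangles. The only real difference is that you additionally package the top dashed row as a genuine distinguished triangle by identifying $\cone(\tilde a)$ with $\CB$ --- which does go through, since the factorisation of $\BR\olf_{U_1*}\CQ(2)\to\BR\olf_{U_0*}\CQ(2)$ through $\tau_{\le1}$ is unique because $\Hom(D^{\le 2},D^{\ge 3})=0$ --- whereas the paper only constructs the individual maps and takes the exactness it actually needs (the third row) directly from the Galois cohomology of the short exact sequence (\ref{sequence: s-e-s of Galois modules I}).
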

\begin{proof}
Let $j:\olU_0\to \olU_1$ and $i:\olZ_1\to \olU_1$ be open and closed immersions, respectively.
We consider the exact sequence
$0\to i_*i^!\CQ(2)\to \CQ(2)\to j_*j^*\CQ(2)\to i_*R^1i^!\CQ(2)\to 0$
of \'etale sheaves over $U_1$ (for example, see the proof of \cite{Fu11}*{Corollary 8.5.6}).
The isomorphism $R^{q}j_*\CQ(2)\simeq i_*R^{q+1}i^!\CQ(2)$ for $q\ge 1$ yields a distinguished triangle
\be\label{triangle: purity triangle I}
i_*Ri^!\CQ(2)\to \CQ(2)\to Rj_*j^*\CQ(2)\to i_*Ri^!\CQ(2)[1].
\ee
According to \cite{Fu11}*{Theorem 8.5.2} (or see \emph{loc. cit.} pp.~467 bottom for a more explicit version),
we have $R i^!\CQ(2)\simeq i^*\CQ(1)[-2]$.
Let $\olf_{\FX}$ denotes the structural morphism over $\olK$ of a  $K$-scheme $\FX$.
Then we obtain $\olf_{U_1}\circ i=\olf_{Z_1}$
and hence $\BR \olf_{U_1}i_*Ri^!\CQ(2)=\BR \olf_{Z_1}i^*\CQ(1)[-2]=\BR \olf_{Z_1}\CQ(1)[-2]$.
Now applying the functor $\BR \olf_{U_1*}$ to (\ref{triangle: purity triangle I})
yields a distinguished triangle
\be\label{triangle: purity triangle II}
\BR \olf_{Z_1*}\CQ(1)[-2]\to \BR \olf_{U_1*}\CQ(2)\to \BR \olf_{U_0*}\CQ(2)\to \BR \olf_{Z_1*}\CQ(1)[-1].
\ee
\bitem
\itm
The lower row of VB is obtained by taking Galois cohomology of (\ref{sequence: s-e-s of Galois modules I}).

\itm
We construct the upper row of VB.
The left upper horizontal arrow is constructed as follows.
Let $\CF$ be an \'etale sheaf over $\ol{\FX}$ for some $K$-scheme $\FX$.
Then we have  $H^2(\BR \olf_{\FX *} \CF)=R^2\olf_{\FX*}\CF\simeq H^2(\ol{\FX},\CF)$.
Consider the \cmdm of short exact sequences in the derived category of \'etale sheaves
\beac\label{diagram: constructing morphisms between truncated complexes}
\xm@C-10pt@R10pt{
0\ar[r]
& \wt{\tau}_{\le 1}\BR \olf_{U_1*}\CQ(2)\ar[r]\ar[d]
& \tau_{\le 2}\BR \olf_{U_1*}\CQ(2)\ar[r]\ar[d]\ar@{-->}[ld]
& H^2(\ol{U}_1,\CQ(2))[-2]\ar[r]\ar[d]
& 0 \\
0\ar[r]
& \wt{\tau}_{\le 1}\BR \olf_{U_0*}\CQ(2)\ar[r]
& \tau_{\le 2}\BR \olf_{U_0*}\CQ(2)\ar[r]
& H^2(\ol{U}_0,\CQ(2))[-2]\ar[r]
& 0
}
\eeac
where the vertical arrows are induced by the middle arrow of (\ref{triangle: purity triangle II}),
and the rows come from respective short exact sequences in the category of complexes of \'etale sheaves.
Since the right vertical map is zero by \cref{lemma: purity I}\ref{Pure1},
the arrow
$\tau_{\le 2}\BR \olf_{U_1*}\CQ(2)\to \tau_{\le 2}\BR \olf_{U_0*}\CQ(2)$
factors uniquely through
$\wt{\tau}_{\le 1}\BR \olf_{U_0*}\CQ(2)$
by the universal property of kernels.
But the quasi-isomorphism ${\tau}_{\le 1}\BR \olf_{U_0*}\CQ(2)\to \wt{\tau}_{\le 1}\BR \olf_{U_0*}\CQ(2)$
becomes an isomorphism in the derived category,
so we obtain a map
$\tau_{\le 2}\BR \olf_{U_1*}\CQ(2)\to {\tau}_{\le 1}\BR \olf_{U_0*}\CQ(2)$.

The right upper horizontal arrow is obtained by $\BR \olf_{U_0*}\CQ(2)\to \BR \olf_{Z_1*}\CQ(1)[-1]$ from (\ref{triangle: purity triangle II}).
Indeed, we have a map
\[
\tau_{\le 1}\BR \olf_{U_0*}\CQ(2)\to \tau_{\le 1}(\BR \olf_{Z_1*}\CQ(1)[-1])=(\tau_{\le 0}\BR \olf_{Z_1*}\CQ(1))[-1].
\]

\itm
The vertical arrows of VB are induced by the \distri
(see \cite{KSch06cat-and-sheaves}*{pp.~303, (12.3.2) and (12.3.3)})
\[
{\tau}_{\le j-1}\BR\ol{\pi}_*\CQ(2)
\to \tau_{\le j}\BR \ol{\pi}_*\CQ(2)
\to H^j(\ol{\FX},\CQ(2))[-j]
\to {\tau}_{\le j-1}\BR\ol{\pi}_*\CQ(2)[1]
\]
with $\ol{\pi}:\ol{\FX}\to \olK$ the structural morphism.
The map
\[
H^3(K,\tau_{\le2}\BR \olf_{U_1*}\CQ(2))\to H^1(K,H^2(\ol{U}_1,\CQ(2)))\]
is \emph{surjective}
since $H^4(K,\QZ(2))=0$.
Similarly, the other vertical arrows are also surjective.

\itm
Taking Galois cohomology of the triangle (\ref{triangle: purity triangle II}) yields an \ttes
\[
H^3(K,\BR f_{U_1*}\CQ(2))\to H^3(K,\BR f_{U_0*}\CQ(2))\to H^2(K,\BR f_{Z_1*}\CQ(1)),
\]
i.e. $H^3(U_1,\CQ(2))\to H^3(U_0,\CQ(2))\to H^2(Z_1,\CQ(1))$ is exact.
The horizontal rows of VF are constructed.

\itm
All the vertical arrows of VF are canonical projections.

\itm
Vertical arrows of HU
(those arrows of the form $H^i(K,\tau_{\le j}\BR\olf_{\FX*}\CF)\to H^i(\FX,\CF)$)
are induced by the canonical map $\tau_{\le i}A^*\to A^*$ of complexes,
and that of HL are obtained by the \HSerre spectral sequence
$H^p(K,H^q(\ol{\FX},\CQ(i)))\Ra H^{p+q}(\FX,\CQ(i))$,
where $\FX$ is a variety defined over $K$.
\eitem
Now we check the commutativity of the diagram.
\bitem

\itm
VL, VM and VR commute by construction of the \HSerre spectral sequence (see \cite{HSz16}*{pp.~17}).

\itm
VF commutes by construction.
For VB, we have a diagram in the derived category of \'etale sheaves
\[
\xm@C-10pt@R10pt{
\tau_{\le 2}\BR \olf_{U_1*}\CQ(2)\ar[r]\ar[d] & 
{\tau}_{\le 1}\BR \olf_{U_0*}\CQ(2)\ar[r]\ar[d] & 
\tau_{\le 0}\BR \olf_{Z_1*}\CQ(1)[-1]\ar[d] \\
H^2(\ol{U}_1,\CQ(2))[-2]\ar[r] & 
H^1(\ol{U}_0,\CQ(2))[-1]\ar[r] &  
H^0(\ol{Z}_1,\CQ(1))[-1]
}
\]
where the vertical arrows come from truncations.
This diagram commutes because the rows are both obtained from the triangle (\ref{triangle: purity triangle II}).
Hence applying the function $H^3(K,-)$ yields the commutativity of VB.

\itm
HU commutes by construction of truncation and by functoriality of Galois cohomology.
\eitem
Thus the horizontal lower face HL commutes by diagram chasing.
\end{proof}

\begin{cor}\label{result: Sha-2 of tori non-ram general}
The image of $\Sha^2_{\og}(T')$ in ${H^3(Y,\CQ(2))}/{\Im H^3(K,\CQ(2))}$ is unramified.
\end{cor}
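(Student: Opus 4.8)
The plan is to show that, for every $\al\in\Sha^2_{\og}(T')$, the image of $\al$ in $H^3(Y,\CQ(2))/\Im H^3(K,\CQ(2))=H^3(U_0,\CQ(2))/\Im H^3(K,\CQ(2))$ is the restriction along $U_0=Y\subset Y^c$ of a class in $H^3(Y^c,\CQ(2))/\Im H^3(K,\CQ(2))$. Since $Y^c$ is a smooth proper $K$-variety, \cite{CT95SBB}*{Proposition 2.1.8} then forces the image of $H^3(Y^c,\CQ(2))$ in $H^3(K(Y^c),\CQ(2))=H^3(K(Y),\CQ(2))$ to be unramified, and passing to the quotient by $\Im H^3(K,\CQ(2))$ gives the assertion. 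Throughout I use \cite{HSz16}*{Lemma 5.2} to identify $H^2(K,H^1(\ol{U}_0,\CQ(2)))$ with $H^2(K,T')$; as this identification is induced by a morphism of Galois modules it is compatible with restriction to the completions $K_v$, so $\Sha^2_{\og}(T')$ is carried into the subgroup of $H^2(K,H^1(\ol{U}_0,\CQ(2)))$ of classes vanishing in $H^2(K_v,H^1(\ol{U}_0,\CQ(2)))$ for all but finitely many $v\in X\uun$.

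First I would run $\al$ through the upper row of HL in \cref{lemma: 2 by 2 by 3 diagram}, which (obtained by taking Galois cohomology of the short exact sequence (\ref{sequence: s-e-s of Galois modules I})) is exact at its middle term:
\[
H^1(K,H^2(\ol{U}_1,\CQ(2)))\xrightarrow{\ \partial\ }H^2(K,H^1(\ol{U}_0,\CQ(2)))\xrightarrow{\ b\ }H^2(K,H^0(\ol{Z}_1,\CQ(1))).
\]
Because $b$ is induced by a map of Galois modules it commutes with restriction to each $K_v$, whence $b(\al)\in\Sha^2_{\og}(H^0(\ol{Z}_1,\CQ(1)))$, which vanishes by \cref{lemma: purity I}\ref{Pure3}. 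Exactness then produces a class $\gamma\in H^1(K,H^2(\ol{U}_1,\CQ(2)))$ with $\partial(\gamma)=\al$.

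Next I would push $\gamma$ down to $H^3(U_1,\CQ(2))/\Im H^3(K,\CQ(2))$ along the arrow $H^1(K,H^2(\ol{U}_1,\CQ(2)))\to H^3(U_1,\CQ(2))/\Im H^3(K,\CQ(2))$ of HL, getting a class $\delta$. Commutativity of the bottom face of the left cube identifies the image of $\al$ in $H^3(U_0,\CQ(2))/\Im H^3(K,\CQ(2))$ with the restriction of $\delta$ to $U_0$. To see $\delta$ descends to $Y^c$ it suffices, by \cref{lemma: purity I}\ref{Pure2} (the identity (\ref{identity: kernel v-s image})), to check that $\delta$ maps to $0$ in $H^3(\ol{U}_1,\CQ(2))$. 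For this I would lift $\gamma$ through the surjection $H^3(K,\tau_{\le2}\BR \olf_{U_1*}\CQ(2))\twoheadrightarrow H^1(K,H^2(\ol{U}_1,\CQ(2)))$ of \cref{lemma: 2 by 2 by 3 diagram} and push the lift down through HU to a class $\eta\in H^3(U_1,\CQ(2))$; commutativity of the left face of the left cube shows $\eta$ represents $\delta$. Since the restriction $H^3(U_1,\CQ(2))\to H^3(\ol{U}_1,\CQ(2))$ factors through $H^3(K,\tau_{\ge3}\BR \olf_{U_1*}\CQ(2))$, which kills the image of $H^3(K,\tau_{\le2}\BR \olf_{U_1*}\CQ(2))$ by the truncation triangle $\tau_{\le2}\BR \olf_{U_1*}\CQ(2)\to\BR \olf_{U_1*}\CQ(2)\to\tau_{\ge3}\BR \olf_{U_1*}\CQ(2)$, the class $\eta$, hence $\delta$, dies in $H^3(\ol{U}_1,\CQ(2))$. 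Thus $\delta$ is the restriction to $U_1$ of some $\delta^c\in H^3(Y^c,\CQ(2))/\Im H^3(K,\CQ(2))$ (using \cref{lemma: inductive isomorphism by purity} for $H^3(Y^c,\CQ(2))\simeq H^3(U_2,\CQ(2))$), so the image of $\al$ in $H^3(Y,\CQ(2))/\Im H^3(K,\CQ(2))$ equals the restriction of $\delta^c$ along $Y=U_0\subset U_1\subset U_2\subset Y^c$; restricting further to the generic point of $Y$ and invoking \cite{CT95SBB}*{Proposition 2.1.8} for the smooth proper variety $Y^c$ completes the argument.

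The step I expect to be the main obstacle is the claim that $\delta$ lies in the kernel of restriction to $\ol{U}_1$: morally the classes coming from $H^1(K,H^2(\ol{U}_1,\CQ(2)))$ have Hochschild--Serre filtration degree $\ge1$ and so restrict trivially to the geometric fibre, but making this precise forces one to open up the truncation-triangle construction of the maps in \cref{lemma: 2 by 2 by 3 diagram} rather than using only the formal commutativity of the cube, and it is exactly this point that feeds into the ``kernel equals image'' identity (\ref{identity: kernel v-s image}). The remaining manipulations — exactness of the upper row of HL, commutativity of the bottom and left faces of the left cube, and the final reduction to unramified cohomology on the smooth proper model $Y^c$ — are routine diagram chases.
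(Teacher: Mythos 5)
Your proposal is correct and follows essentially the same route as the paper: lift $\al\in\Sha^2_{\og}(T')$ through the exact upper row of HL using $\Sha^2_{\og}(H^0(\ol{Z}_1,\CQ(1)))=0$, push the lift down to $H^3(U_1,\CQ(2))/\Im H^3(K,\CQ(2))$, kill it in $H^3(\ol{U}_1,\CQ(2))$, invoke the identity (\ref{identity: kernel v-s image}) to descend to $Y^c$, and finish with \cite{CT95SBB}*{Proposition 2.1.8}. The only cosmetic difference is at the step you flag as the main obstacle: the paper disposes of it in one line by noting that the composite $H^1(K,H^2(\ol{U}_1,\CQ(2)))\to H^3(\ol{U}_1,\CQ(2))$ factors through $H^1(\olK,H^2(\ol{U}_1,\CQ(2)))=0$ by functoriality of the Hochschild--Serre spectral sequence, which is equivalent to your truncation-triangle argument.
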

\pf
By \cref{lemma: 2 by 2 by 3 diagram} and functoriality, the following diagram commutes:
\[
\xymatrix@R10pt{
& \Sha^2_{\og}(T')\ar[r]\ar[d]_{\iota_T} & \Sha^2_{\og}(H^0(\ol{Z}_1,\CQ(1)))\ar[d]_{\iota_Z}\\
H^1(K,H^2(\ol{U}_1,\CQ(2)))\ar[r]^-{\Phi}\ar[d]_{\mrm{HS}_1} &
H^2(K,H^1(\ol{U}_0,\CQ(2)))\ar[r]\ar[d]_{\mrm{HS}_0} & H^2(K,H^0(\ol{Z}_1,\CQ(1)))\ar[d]\\
\frac{H^3(U_1,\CQ(2))}{\Im H^3(K,\CQ(2))}\ar[r]_{\Psi} &
\frac{H^3(Y,\CQ(2))}{\Im H^3(K,\CQ(2))}\ar[r] & H^2(Z_1,\CQ(1)).
}
\]
Here $\iota_T$ and $\iota_Z$ are respective inclusions.
According to \cref{lemma: purity I},
the second row is exact and $\Sha^2_{\og}(H^0(\ol{Z}_1,\CQ(1)))=0$.
Subsequently a diagram chasing shows that $\Im(\iota_T)\subset\Im \Phi$.
By commutativity of the left lower square,
$\Im (\mrm{HS}_0\circ\iota_T)\subset \Im (\mrm{HS}_0\circ\Phi)=\Im (\Psi\circ \mrm{HS}_1)$.
The composite map
\[
H^1(K,H^2(\ol{U}_1,\CQ(2)))\to \tst\frac{H^3(U_1,\CQ(2))}{\Im H^3(K,\CQ(2))}\to H^3(\ol{U}_1,\CQ(2))
\]
factors through $H^1(\olK,H^2(\ol{U}_1,\CQ(2)))=0$ by the functoriality of the Hochschild--Serre spectral sequence.
Thus we obtain from \cref{lemma: purity I}\ref{Pure2} that
\[
\Im \mrm{HS}_1
\subset
\ker\big(\tst\frac{H^3(U_1,\CQ(2))}{\Im H^3(K,\CQ(2))}\to H^3(\ol{U}_1,\CQ(2))\big)
=
\Im\big(\tst\frac{H^3(Y^c,\CQ(2))}{H^3(K,\CQ(2))}\to \frac{H^3(U_1,\CQ(2))}{H^3(K,\CQ(2))}\big).
\]

Thus the image of $\Sha^2_{\og}(T')$ in $H^3(Y,\CQ(2))/\Im H^3(K,\CQ(2))$ comes from $H^3(Y^c,\CQ(2))$,
i.e. $\Sha^2_{\og}(T')$ is unramified.
\inpart the image of $\Sha^2(T')$ in $H^3_{\mrm{lc}}(Y,\CQ(2))$ is unramified.
\qed

\vem
If we restrict ourselves to the subgroup $\Sha^2(T')$ of $\Sha^2_{\og}(T')$,
then the image lies in the subgroup $H^3_{\mrm{lc}}(Y,\QZ(2))$ of $H^3(Y,\QZ(2))/\Im H^3(K,\QZ(2))$.
Now the image of $\Sha^2(T')$ in $H^3(K(Y),\QZ(2))/H^3(K,\QZ(2))$ is unramified by \cref{result: Sha-2 of tori non-ram general}.

\begin{bibdiv}
\begin{biblist}

\bibselect{CF}

\end{biblist}
\end{bibdiv}

Southern University of Science and Technology, No.1088 Xueyuan Blvd, Shenzhen, Canton, China 518055\\
\indent E-mail address: tianys@sustech.edu.cn

\end{document}